\newtheorem{definition}{Definition}[section]
  \newtheorem{theorem}[definition]{Theorem}
   \newtheorem{lemma}[definition]{Lemma}
  \newtheorem{corollary}[definition]{Corollary}
  \newtheorem{proposition}[definition]{Proposition}
  \newtheorem{claim}[definition]{Claim}
  \newtheorem{assertion}[definition]{Assertion}
\newtheorem{remark}[definition]{Remark}
\title{Heegaard splittings of distance exactly $n$}
\author{Ayako Ido, Yeonhee Jang and Tsuyoshi Kobayashi}
\address{Department of Mathematics, Nara Women's University, 
Kitauoyanishi-machi, Nara 630-8506, Japan}
\begin{document}

\begin{abstract}
In this paper, we show that, for any integers $n\geq 2$ and $g\geq 2$, there exist genus-$g$ Heegaard splittings of compact 3-manifolds with distance exactly $n$. 
\end{abstract}


\maketitle

\section{Introduction}
For a closed orientable 3-manifold $M$, we say that $V_{1}\cup_{S} V_{2}$ is a {\it Heegaard splitting} of $M$ if $V_{1}, V_{2}$ are handlebodies such that $M=V_{1}\cup V_{2}$ and $\partial V_{1}=\partial V_{2}=S$. 
Heegaard splittings of compact orientable 3-manifolds with nonempty boundaries can be defined similarly (see Section \ref{sec-pre}).
In \cite{He}, Hempel gave the definition of \textit{distance} of Heegaard splitting by using curve complex introduced by Harvey \cite{Ha}, and showed that there exist arbitrarily high distance Heegaard splittings for closed 3-manifolds by using a construction of Kobayashi \cite{K2}. 
The manifolds are obtained by gluing two copies of handlebodies along their boundaries by the $n$th power of a pseudo-Anosov map for each $n$. 
Abrams and Schleimer \cite{AS} showed that the distance of the Heegaard splitting grows linearly with respect to $n$ by using the result of Masur and Minsky \cite{MM1}.  Moreover, Evans \cite{E} gave a combinatorial method to construct Heegaard splittings of high distance.
The main purpose of this paper is to give an answer to the following question. 

\medskip
\noindent
{\bf Question}
Given $n\geq 1$ and $g\geq 2$, does there exist a genus-$g$ Heegaard splitting with distance exactly $n$?
\medskip

For certain values, there are known examples that answer the above question affirmatively. 
For example, Berge and Scharlemann \cite{BS} showed that there exist 3-manifolds each of which admits  genus-$2$ Heegaard splittings with distance exactly $3$. 

In this paper, for each integer $n\geq 2$, we first construct a Heegaard splitting of a compact orientable 3-manifold with nonempty boundary which has distance exactly $n$.

\begin{theorem}\label{thm-1}
For any integers $n\geq 2$ and $g\geq 2$, there exists a genus-$g$ Heegaard splitting $C_{1}\cup_{P}C_{2}$ with distance exactly $n$, where $C_{1}$ and $C_{2}$ are compression bodies. 
\end{theorem}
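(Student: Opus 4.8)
The plan is to translate the statement entirely into the curve complex $\mathcal{C}(P)$ of the genus-$g$ surface $P=\partial_{+}C_{1}=\partial_{+}C_{2}$, where by definition the distance of $C_{1}\cup_{P}C_{2}$ equals $d_{\mathcal{C}(P)}(\mathcal{D}(C_{1}),\mathcal{D}(C_{2}))$, the minimal distance between the two disk sets. The key simplification I would exploit is that, unlike handlebodies, a compression body can be built with an essentially unique meridian disk: if $C$ is obtained from $\Sigma_{g-1}\times I$ by attaching a single $1$-handle to $\Sigma_{g-1}\times\{1\}$, then $\partial_{+}C$ has genus $g$, $\partial_{-}C=\Sigma_{g-1}$ is nonempty (so $C$ is a genuine compression body for $g\ge 2$), and an innermost-disk argument using the incompressibility of the product region $\Sigma_{g-1}\times I$ shows that $\mathcal{D}(C)$ consists of the single vertex $\partial D$, where $D$ is the cocore of the handle and $\partial D$ is a non-separating curve on $P$. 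Thus I would choose $C_{1}$ and $C_{2}$ of this form, with meridians $\alpha=\partial D_{1}$ and $\beta=\partial D_{2}$, so that $d(C_{1}\cup_{P}C_{2})=d_{\mathcal{C}(P)}(\alpha,\beta)$ and the whole theorem reduces to producing two non-separating simple closed curves on $P$ at distance exactly $n$.

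For the upper bound $d_{\mathcal{C}(P)}(\alpha,\beta)\le n$ I would fix $\alpha$ and build $\beta$ as the last vertex of an explicit edge-path $\alpha=c_{0},c_{1},\dots,c_{n}=\beta$ in $\mathcal{C}(P)$ with each $c_{i}$ non-separating and $c_{i}$ disjoint from $c_{i+1}$; the existence of such a path of length $n$ is immediate, and it witnesses $d\le n$ at once.

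The main obstacle is the matching lower bound $d_{\mathcal{C}(P)}(\alpha,\beta)\ge n$, since curve-complex distance is notoriously hard to bound from below and a naive path could in principle shortcut the one above. Here I would engineer the curves $c_{0},\dots,c_{n}$ so that their successive differences are controlled by subsurface projections, and invoke the bounded geodesic image theorem of Masur and Minsky: by arranging a suitable essential subsurface $W\subset P$ (or a nested family) for which the projections $\pi_{W}(\alpha)$ and $\pi_{W}(\beta)$ are forced to be far apart in $\mathcal{C}(W)$, any geodesic in $\mathcal{C}(P)$ joining $\alpha$ to $\beta$ must cross $\partial W$, and iterating this across the intermediate curves yields the bound $d\ge n$. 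The delicate point---and the real heart of the argument---is to tune the construction so that the certificate produces the value $n$ exactly rather than merely a large lower bound; concretely I would build $\beta$ from $\alpha$ by $n$ moves, each of which raises the distance by exactly one, and verify the single-step increase by a projection estimate, so that the upper and lower bounds meet.

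Finally, the genus is built into the model from the start: the surface $P$ is taken to be $\Sigma_{g}$ for the prescribed $g\ge 2$, and every ingredient above---the one-handled compression bodies, the non-separating path, and the projection estimates---is available for each such $g$. I would emphasize that one cannot reach higher genus by stabilization, since stabilizing collapses the distance to at most $1$; instead the construction is carried out directly on $\Sigma_{g}$, producing for every $n\ge 2$ and $g\ge 2$ a splitting $C_{1}\cup_{P}C_{2}$ of distance exactly $n$.
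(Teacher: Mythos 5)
There is a genuine gap, and it sits at the very foundation of your reduction. Your claim that for the compression body $C$ obtained by attaching a single $1$-handle to $\Sigma_{g-1}\times I$ the disk set $\mathcal{D}(C)$ ``consists of the single vertex $\partial D$'' is false. What is true (and what the paper proves in its Lemma \ref{lem-nonsepa}) is that $D$ is the unique \emph{non-separating} essential disk up to isotopy. But $C$ also contains infinitely many \emph{separating} essential disks: take two parallel copies $D^{+},D^{-}$ of $D$ and band them together along any suitable arc in $\partial_{+}C$; the resulting disk is essential and separating, and varying the arc produces infinitely many isotopy classes (this is exactly Remark \ref{rem-non-sep-1}'s companion, Remark 3.2 of the paper, which notes $\mathcal{D}_{\rm sep}(V)$ is countably infinite). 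Consequently your identity $d(C_{1}\cup_{P}C_{2})=d_{\mathcal{C}(P)}(\alpha,\beta)$ between the Heegaard distance and the distance of the two non-separating meridians does not hold; the Heegaard distance is only bounded between $d_{\mathcal{C}(P)}(\alpha,\beta)-2$ and $d_{\mathcal{C}(P)}(\alpha,\beta)$, since each separating disk lies at distance $1$ from the unique non-separating one (Proposition \ref{prop-disk-complex}) but may be closer to the other side. Worse, with the natural geodesic construction the drop actually occurs: the band sum of $D_{1}$ with itself along a subarc of the second vertex $\alpha_{2}$ of the geodesic yields a separating disk in $C_{1}$ whose boundary is \emph{disjoint} from $\alpha_{2}$, and symmetrically on the other side, so gluing meridians at distance exactly $n$ produces a splitting of distance $n-2$, not $n$.

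The paper's proof repairs precisely this: it builds a geodesic $[\alpha_{0},\dots,\alpha_{n+2}]$ of length $n+2$ (not $n$) between the two non-separating meridians, then shows the distance is exactly $n$ by a two-sided argument --- the lower bound $n$ follows because Proposition \ref{prop-disk-complex} forces every essential disk of $C_{i}$ to lie within distance $1$ of $\partial D_{i}$, so no pair of disks can be closer than $(n+2)-2$; the upper bound $n$ is witnessed by the explicit separating disks $D_{1}'$, $D_{2}'$ obtained by band sums along subarcs of $\alpha_{2}$ and $\alpha_{n}$. Your remaining ingredients (the one-handled compression bodies, the projection-based certification of geodesics in the spirit of Masur--Minsky) are in the same spirit as the paper's Sections \ref{sec-disk-complex} and \ref{sec-curves}, and your sketch of building curves whose distance is certified step by step by subsurface projections is essentially the paper's inductive construction via Propositions \ref{prop1} and \ref{prop2}; but without accounting for the separating disks, the argument as proposed proves distance $n-2$, so the offset and the two-sided bookkeeping above are not optional refinements but the heart of the proof.
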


To prove Theorem \ref{thm-1}, we give a method of constructing a pair of curves with distance exactly $n$. 
In fact, Schleimer \cite{S} gave a method of constructing a pair of curves with distance exactly four on the five-holed sphere by using {\it subsurface projection maps} defined by Masur and Minsky \cite{MM2} (for the definition, see Section \ref{sec-pre}). 
In Section \ref{sec-curves}, we mimic the idea of Schleimer to construct a pair of curves with distance exactly $n$ for any positive integer $n$.  
By using the pair of curves and the properties of a compression body obtained by adding a 1-handle to $S\times[0,1]$ where $S$ is a closed surface (for detail, see Section \ref{sec-disk-complex}), we give the proof of Theorem \ref{thm-1}. 
As a consequence of Theorem \ref{thm-1}, we have Corollary \ref{thm-2}.

\begin{corollary}\label{thm-2}
For any integers $n\geq 2$ and $g\geq 2$, there exists a genus-$g$ Heegaard splitting of a closed 3-manifold with distance exactly $n$.
\end{corollary}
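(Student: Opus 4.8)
The plan is to derive Corollary \ref{thm-2} from Theorem \ref{thm-1} by capping off the negative boundaries of the compression bodies $C_1,C_2$ with handlebodies. Fix $n\ge 2$ and $g\ge 2$ and let $C_1\cup_P C_2$ be the genus-$g$ splitting of distance exactly $n$ supplied by Theorem \ref{thm-1}. For each $i$ with $\partial_-C_i\neq\emptyset$, attach a handlebody $H_i$ to each component of $\partial_-C_i$ along a homeomorphism $\psi_i$. Since a compression body becomes a handlebody once its negative boundary is filled by handlebodies, $V_i:=C_i\cup_{\psi_i}H_i$ is a genus-$g$ handlebody with $\partial_+V_i=\partial_+C_i=P$, and $V_1\cup_P V_2$ is a genus-$g$ Heegaard splitting of a closed orientable $3$-manifold, whatever the gluings $\psi_i$ are. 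Because Heegaard distance is computed in the curve complex $\mathcal C(P)$ as $d(\mathcal D(V_1),\mathcal D(V_2))$ and $C_i\subset V_i$ forces $\mathcal D(C_i)\subseteq\mathcal D(V_i)$, we get for free that $d(V_1\cup_P V_2)\le d(C_1\cup_P C_2)=n$. Hence the entire content is to choose the $\psi_i$ so that the distance does not drop below $n$.

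For the lower bound I would glue along sufficiently complicated maps, writing $\psi_i=\rho_i\circ\phi_i^{\,k}$ with $\rho_i$ a fixed reference homeomorphism, $\phi_i$ a pseudo-Anosov of $\partial_-C_i$, and $k\gg 0$. The product region of $C_i$ lets me view $\partial_-C_i$ as an essential subsurface $F_i\subset P$, namely the complement of the cocores of the $1$-handles defining $C_i$, so that the subsurface projection $\pi_{F_i}$ of Masur--Minsky \cite{MM2} is available on $\mathcal C(P)$. Two structural facts should hold. First, since $\partial_-C_i$ is incompressible in $C_i$, every disk of $V_i$ is either already a disk of $C_i$ or is obtained by tubing meridians of $H_i$ through $C_i$; in particular the $1$-handle meridians $\partial F_i$ lie in $\mathcal D(C_i)\subseteq\mathcal D(V_i)$, and the images under $\psi_i$ of the $H_i$-meridians, pushed across the product region into $F_i$, are new disks whose projections $\pi_{F_i}$ are pushed off to infinity as $k$ grows, because $\phi_i$ is pseudo-Anosov. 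Second, $\pi_{F_i}(\mathcal D(C_i))$ stays within bounded diameter.

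Granting these, the one-sided case (when only one of $C_1,C_2$, say $C_1$, has nonempty negative boundary, so $V_2=C_2$) is clean and illustrates the mechanism. Suppose a path $c_0,\dots,c_m$ in $\mathcal C(P)$ of length $m<n$ joined $\mathcal D(V_1)$ to $\mathcal D(V_2)=\mathcal D(C_2)$. If $c_0\in\mathcal D(C_1)$ we already contradict $d(\mathcal D(C_1),\mathcal D(C_2))=n$; otherwise $c_0$ is a new disk with enormous $\pi_{F_1}$-projection, so by the bounded geodesic image theorem of Masur--Minsky \cite{MM2} some $c_j$ with $j\ge 1$ is disjoint from $F_1$ and hence within distance $1$ of the old disk $\partial F_1\in\mathcal D(C_1)$. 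Then $\partial F_1,c_j,c_{j+1},\dots,c_m$ is a path of length $\le 1+(m-j)\le m<n$ from $\mathcal D(C_1)$ to $\mathcal D(C_2)$, contradicting Theorem \ref{thm-1}. When both negative boundaries are nonempty one runs the same replacement first on the $F_1$-end and then, on the resulting path, on the $F_2$-end, using that $\partial F_1$ has a fixed and hence bounded $F_2$-projection so that the bounded geodesic image theorem still applies.

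The main obstacle is exactly the step I have glossed: controlling the new disks and, in the two-sided case, ruling out that a new disk of $V_1$ and a disk of $V_2$ have accidentally close $F_1$-projections, since $\pi_{F_1}(\mathcal D(V_2))$ need not be bounded a priori. I expect this to require more than a generic pseudo-Anosov estimate; the natural remedy is to feed in the controlled subsurface projections already engineered in the construction behind Theorem \ref{thm-1}, choosing the $\rho_i$ and a single threshold $k$ so that the projections of $\mathcal D(V_1)$ and $\mathcal D(V_2)$ to $F_1$ and $F_2$ are simultaneously separated. Once that bookkeeping is in place the two-sided replacement goes through and yields $d(V_1\cup_P V_2)\ge n$, hence equal to $n$.
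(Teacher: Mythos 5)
Your reduction (cap off $\partial_-C_i$ by handlebodies, note $\mathcal D(C_i)\subseteq\mathcal D(V_i)$ to get the upper bound $\le n$ for free) matches the paper's starting point, but your lower bound rests on a step that is not proved and is in fact the entire difficulty: that after gluing $H_1$ to $\partial_-C_1$ by a high power of a pseudo-Anosov map, \emph{every} essential disk of $V_1$ that is not a disk of $C_1$ has $\pi_{F_1}$-projection far from the relevant target set. The set of such ``new'' disks is infinite — general disks meet $\partial_-C_1$ in many curves and are built from many meridian pieces of $H_1$ tubed through $C_1$ — and their projections to $F_1$ form an unbounded set for every $k$; ``pushed off to infinity because $\phi_1$ is pseudo-Anosov'' controls the images of a fixed finite family of meridians, not the whole disk complex. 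A statement of this kind (gluing along the negative boundary by a complicated map preserves Heegaard distance) is essentially the main theorem of \cite{MQZ,QZG}, and the paper's own introduction recounts that its first version relied on exactly such a theorem and that a gap was found in its proof; so this route is known to be genuinely delicate, not a routine estimate. Your two-sided remedy (``choose $\rho_i$ and a single threshold $k$ so that the projections of $\mathcal D(V_1)$ and $\mathcal D(V_2)$ to $F_1$ and $F_2$ are simultaneously separated'') has no mechanism behind it: both projection sets are unbounded and both move as $k$ varies, so there is nothing to make the separation happen.

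The paper avoids this problem by a structurally different twisting. In Proposition \ref{prop-appendix} the handlebodies are glued to $\partial_-C_i$ arbitrarily, and instead the Heegaard gluing $f$ itself is composed with a homeomorphism $g$ of the Heegaard surface that fixes the boundary of a \emph{separating} disk $D_0$ of $V_1$. Fixing $\partial D_0$ preserves the upper bound $n$ using that single disk; cutting $V_1$ along $D_0$ into $V_1^1,V_1^2$ means one only has to separate the (controllable) disk sets $\mathcal D'(V_1^i)$ of the two pieces from the projection of the other side — not all of $\mathcal D(V_1)$ — and an outermost-disk argument relates an arbitrary disk of $V_1$ to a disk of some $V_1^i$ lying in its own subsurface projection. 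The unboundedness issue on the other side is killed by Li's theorem \cite{Li}, which bounds ${\rm diam}_{F_i}(\pi_{F_i}f^{-1}(\mathcal D(V_2)))\le 12$ once the distance is $\ge 3$, and the proposition is applied twice (first with the finite set $\{\partial D_2\}$, then with the full disk complex) to handle the two sides sequentially. Note finally that this machinery requires $n\ge 3$, which is why the paper proves the case $n=2$ by a completely separate construction in the Appendix (Kobayashi--Rieck strongly irreducible splittings, bounded above by Hartshorn's theorem via an essential torus); your proposal does not flag $n=2$ as special and offers no substitute argument there.
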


\begin{remark}
{\rm 
In \cite{QZG}, the statement completely including Corollary \ref{thm-2} is given.
In fact, they show in \cite[Theorem 1]{QZG} that for each pair of integers $n\geq 1$ and $g\geq 2$ with $(n,g)\neq (1,2)$, there is a genus-$g$ Heegaard splitting of a closed 3-manifold with distance $n$.
We note that it is also remarked in \cite{QZG} that the pair $(n,g)=(1,2)$ is not realizable.
}
\end{remark}

After finishing the first version of this paper, Ruifeng Qiu informed us that there was a gap in the proof of the theorem which we used in the proof of Corollary 1.2 in the first version, and he sent us the paper \cite{MQZ}. 
We tried to fill the gap, and could give a partial result which works to give the corollary in our setting.
By the way, we learned from \cite{MQZ} that Li \cite{Li} gave a very sharp estimation of the radius of the image of the disk complex of a compact 3-manifold by subsurface projections, and this drastically improved the conclusion of Corollary 1.2 in the first version.
When the new version was almost completed, we found that \cite{QZG} was uploaded on arXiv.org, and the main result of \cite{QZG} completely covers Corollary \ref{thm-2} of this paper. 
Further, Yanqing Zou, who is one of the authors of \cite{QZG}, informed us that our arguments in the revised version mentioned above contains a gap and made some suggestions.
Thanks to the suggestion, precisely speaking with consulting \cite{QZG}, we could give the formulation of Proposition \ref{prop-appendix} in this paper.

We thank Dr. Michael Yoshizawa for many helpful discussions, particularly for teaching us the ideas of his dissertation which includes the existence of Heegaard splitting of distance $2n$ for each integer $n\geq 1$,
and we also thank Professor Yo'av Rieck for helpful information.
We would like to especially thank Professor Ruifeng Qiu and Dr. Yanqing Zou for giving us information including the preprints \cite{MQZ, QZG}.

\section{Preliminaries}\label{sec-pre}

Let $S$ be a compact connected orientable surface with genus $g$ and $p$ boundary components. 
A simple closed curve in $S$ is {\it essential} if it does not bound a disk in $S$ and is not parallel to a component of $\partial{S}$. 
An arc properly embedded in $S$ is {\it essential} if it does not co-bound a disk in $S$ together with an arc on $\partial{S}$. 
We say that $S$ is {\it sporadic} if $g=0, p\leq 4$ or $g=1, p\leq1$. 
We say that $S$ is {\it simple}, if $S$ contains no essential simple closed curves. 
We note that $S$ is simple if and only if $S$ is a 2-sphere with at most three boundary components. 
A subsurface $X$ in $S$ is {\it essential} if each component of $\partial X$ is contained in $\partial S$ or is essential in $S$.

\medskip
\noindent
{\bf Heegaard splittings}

A connected 3-manifold $V$ is a {\it compression body} if there exists a closed (possibly empty) surface $F$ and a 0-handle $B$ such that $V$ is obtained from $F\times[0, 1]\cup B$ 
by adding 1-handles to $F \times \{1\}\cup \partial B$. 
The subsurface of $\partial V$ corresponding to $F\times\{0\}$ is denoted by $\partial_{-} V$. 
Then $\partial_{+} V$ denotes the subsurface $\partial V\setminus\partial_{-} V$ of $\partial V$. 
The genus of $\partial_{+} V$ is the {\it genus} of the compression body $V$.
A compression body $V$ is called a {\it handlebody} if $\partial_- V=\emptyset$.
A disk $D$ properly embedded in $V$ is called an {\it essential disk} if $\partial D$ is an essential simple closed curve in $\partial_{+}V$.

Let $M$ be a compact orientable 3-manifold. 
We say that $C_{1}\cup_{P}C_{2}$ is a {\it genus-$g$ Heegaard splitting} of $M$ if $C_{1}$ and $C_{2}$ are compression bodies of genus $g$ in $M$ such that $C_{1}\cup C_{2}=M$ and $C_1\cap C_2=\partial_{+}C_{1}=\partial_{+}C_{2}=P$. 
Alternatively, given a Heegaard splitting $C_{1}\cup_{P}C_{2}$ of $M$, we may regard that there is a homeomorphism $f:\partial_+ C_1\rightarrow\partial_+C_2$ such that $M$ is obtained from $C_1$ and $C_2$ by identifying $\partial_+ C_1$ and $\partial_+C_2$ via $f$.
When we take this viewpoint, we will denote the Heegaard splitting by the expression $C_{1}\cup_{f}C_{2}$.

\medskip
\noindent
{\bf Curve complexes}

Except in sporadic cases, the {\it curve complex} $\mathcal{C}(S)$ is defined as follows: each vertex of $\mathcal{C}(S)$ is the isotopy class of an essential simple closed curve on $S$, and a collection of $k+1$ vertices forms a $k$-simplex of $\mathcal{C}(S)$ if they can be realized by mutually disjoint curves in $S$. 
In sporadic cases, we need to modify the definition of the curve complex slightly, as follows. 
Note that the surface $S$ is simple unless $S$ is a torus, a torus with one boundary component, or a sphere with 4 boundary components.
When $S$ is a torus or a torus with one boundary component (resp. a sphere with 4 boundary components), a collection of $k+1$ vertices forms a $k$-simplex of $\mathcal{C}(S)$ if they can be realized by curves in $S$ which mutually intersect exactly once (resp. twice). 
The {\it arc-and-curve complex} $\mathcal{AC}(S)$ is defined similarly, as follows: each vertex of $\mathcal{AC}(S)$ is the isotopy class of an essential properly embedded arc or an essential simple closed curve on $S$, and a collection of $k+1$ vertices forms a $k$-simplex of $\mathcal{AC}(S)$ if they can be realized by mutually disjoint arcs or simple closed curves in $S$. 
Throughout this paper, for a vertex $x\in\mathcal{C}(S)$ we often abuse notations and use $x$ to represent (the isotopy class of) a geometric representative of $x$. 
The symbol $\mathcal{C}^0(S)$ (resp. $\mathcal{AC}^0(S)$) denotes the 0-skeleton of $\mathcal{C}(S)$ (resp. $\mathcal{AC}(S)$).

For two vertices $x, y$ of $\mathcal{C}(S)$, we define the {\it distance} $d_{\mathcal{C}(S)}(x, y)$ between $x$ and $y$, which will be denoted by $d_{S}(x, y)$ in brief, as the minimal number of 1-simplexes of a simplicial path in $\mathcal{C}(S)$ joining $x$ and $y$. 
For two subsets $X, Y$ of $\mathcal{C}^0(S)$, 
we define ${\rm diam}_{S}(X, Y):=$ the diameter of $X\cup Y$. 
Similarly, we can define $d_{\mathcal{AC}(S)}(x, y)$ for $x,y\in\mathcal{AC}^0(S)$ and ${\rm diam}_{\mathcal{AC}(S)}(X, Y)$ for $X,Y\subset\mathcal{AC}^0(S)$.

For a sequence $a_0,a_1,\dots,a_n$ of vertices in $\mathcal{C}(S)$ with $a_i\cap a_{i+1}
=\emptyset$ $(i=0,1,\dots,n-1)$, we denote by $[a_0,a_1,\dots,a_n]$ the path in $\mathcal{C}(S)$ with vertices $a_0,a_1,\dots,a_n$ in this order.
We say that a path $[a_0,a_1,\dots,a_n]$ is a {\it geodesic} if $n=d_S(a_0,a_n)$.

Let $V$ be a compression body. 
Then the {\it disk complex} $\mathcal{D}(V)$ is the subset of $\mathcal{C}^0(\partial_{+}V)$ consisting of the vertices with representatives bounding disks of $V$. For a genus-$g(\geq 2)$ Heegaard splitting $C_{1}\cup_{P}C_{2}$, the (Hempel) {\it distance} of $C_{1}\cup_{P}C_{2}$ is defined by $d_{P}(\mathcal{D}(C_{1}),\mathcal{D}(C_{2}))=\min\{d_{P}(x, y)\mid x\in\mathcal{D}(C_{1}), y\in\mathcal{D}(C_{2})\}$.

\medskip
\noindent
{\bf Subsurface projection maps}

Let $\mathcal{P}(Y)$ denote the power set of a set $Y$. 
Suppose that $X$ is an essential subsurface of $S$. 
We call the composition $\pi_0\circ\pi_A$ of maps $\pi_A:\mathcal{C}^{0}(S)\rightarrow \mathcal{P}(\mathcal{AC}^{0}(X))$ and $\pi_0:\mathcal{P}(\mathcal{AC}^{0}(X))\rightarrow\mathcal{P}(\mathcal{C}^{0}(X))$ a {\it subsurface projection} if they satisfy the following: for a vertex $\alpha$, take a representative $\alpha$ 
so that $|\alpha\cap X|$ is minimal, where $|\cdot|$ is the number of connected components. Then 

\begin{itemize}
\item $\pi_{A}(\alpha)$ is the set of all isotopy classes of the components of $\alpha\cap X$,
\item $\pi_0(\{\alpha_1,\dots,\alpha_n\})$ is the union, for all $i=1,\dots,n$, of the set of all isotopy classes of the components of $\partial N(\alpha_{i}\cup\partial X)$ which are essential in $X$, where $N(\alpha_{i}\cup\partial X)$ is a regular neighborhood of $\alpha_i\cup\partial X$ in $X$.
\end{itemize}

We say that $\alpha$ {\it misses} $X$ (resp. $\alpha$ {\it cuts} $X$) if $\alpha\cap X=\emptyset$ (resp. $\alpha\cap X\neq\emptyset$). 

\begin{lemma}\label{subsurface distance}
Let $X$ be as above. 
Let $[\alpha_{0}, \alpha_{1}, . . . , \alpha_{n}]$ be a path in $\mathcal{C}(S)$
such that every $\alpha_{i}$ cuts $X$. 
Then ${\rm diam}_{X}(\pi_{X}(\alpha_{0}), \pi_{X}(\alpha_{n}))\leq 2n$.
\end{lemma}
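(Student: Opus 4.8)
The plan is to establish the bound one step at a time and then chain the estimates together. The key observation is that consecutive curves $\alpha_i$ and $\alpha_{i+1}$ in the path are disjoint in $S$, and since both cut $X$, their restrictions to $X$ are disjoint (nonempty) collections of arcs and curves. I would therefore first prove a single-step estimate: for disjoint vertices $\alpha_i,\alpha_{i+1}$ both cutting $X$, one has $\mathrm{diam}_X\bigl(\pi_X(\alpha_i),\pi_X(\alpha_{i+1})\bigr)\leq 2$. Granting this, the triangle inequality immediately gives
\[
\mathrm{diam}_X\bigl(\pi_X(\alpha_0),\pi_X(\alpha_n)\bigr)\leq\sum_{i=0}^{n-1}\mathrm{diam}_X\bigl(\pi_X(\alpha_i),\pi_X(\alpha_{i+1})\bigr)\leq 2n,
\]
which is the claimed conclusion.

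So the heart of the argument is the single-step bound. First I would fix geometric representatives of $\alpha_i$ and $\alpha_{i+1}$ that are disjoint in $S$ and simultaneously minimize intersection with $X$, so that $\alpha_i\cap X$ and $\alpha_{i+1}\cap X$ are disjoint essential arcs-and-curves in $X$. Pick any component $a$ of $\alpha_i\cap X$ and any component $b$ of $\alpha_{i+1}\cap X$; these are disjoint, hence $d_{\mathcal{AC}(X)}(a,b)\leq 1$. The definitions of $\pi_A$ and $\pi_0$ show that $\pi_X(\alpha_i)$ is built from the boundary curves of $N(a\cup\partial X)$ and similarly for $b$, so I must control how far a vertex of $\pi_0(\{a\})$ can be from $a$ itself inside $\mathcal{AC}(X)$, and likewise for $b$. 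The standard fact here is that $\partial N(a\cup\partial X)$ is disjoint from $a$, so each resulting curve is within distance $1$ of $a$ in $\mathcal{AC}(X)$; thus every element of $\pi_X(\alpha_i)$ lies within $\mathcal{AC}$-distance $1$ of $a$, and likewise every element of $\pi_X(\alpha_{i+1})$ lies within distance $1$ of $b$.

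Combining these with $d_{\mathcal{AC}(X)}(a,b)\leq 1$ gives $\mathrm{diam}_{\mathcal{AC}(X)}\bigl(\pi_X(\alpha_i),\pi_X(\alpha_{i+1})\bigr)\leq 3$. The final ingredient is the comparison between the arc-and-curve complex $\mathcal{AC}(X)$ and the curve complex $\mathcal{C}(X)$: there is a standard Lipschitz relation (distances in $\mathcal{C}(X)$ are bounded linearly by distances in $\mathcal{AC}(X)$) that I would invoke to convert the $\mathcal{AC}$-estimate into the desired $\mathcal{C}(X)$-estimate of $2$. I expect the main technical obstacle to be this last conversion together with pinning down the exact constants: one must be careful about how many intermediate vertices are needed when passing from an arc to a nearby curve, and the bookkeeping of the constants ($1$ from boundary parallelism, $1$ from disjointness of $a,b$, and the $\mathcal{AC}$-to-$\mathcal{C}$ comparison) must be arranged so that the per-step total comes out to exactly $2$ rather than a larger constant. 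Once the single-step bound of $2$ is secured, the summation above completes the proof.
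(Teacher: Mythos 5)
Your outer structure (a per-step bound of $2$, then the triangle inequality over the $n$ steps) is exactly the paper's, and that part is fine. The gap is in how you propose to get the per-step bound, which is the entire content of the lemma. Your chain gives, for $c\in\pi_0(\{a'\})\subset\pi_X(\alpha_i)$ and $c'\in\pi_0(\{b'\})\subset\pi_X(\alpha_{i+1})$, the estimate $d_{\mathcal{AC}(X)}(c,c')\leq 3$ via $c$, $a'$, $b'$, $c'$. (Note also that your intermediate claim that \emph{every} element of $\pi_X(\alpha_i)$ is within $\mathcal{AC}$-distance $1$ of the single chosen component $a$ is false: a curve in $\partial N(a'\cup\partial X)$ coming from a different component $a'$ of $\alpha_i\cap X$ generally crosses $a$, since $a$ has endpoints on $\partial X$; only distance $2$ from $a$ is guaranteed.) You then want to convert this $\mathcal{AC}(X)$-estimate into a $\mathcal{C}(X)$-estimate by the standard Lipschitz comparison between the two complexes. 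But that comparison (the coarse retraction $\mathcal{AC}(X)\rightarrow\mathcal{C}(X)$ of \cite[Lemma 2.2]{MM2}) is $2$-Lipschitz, not $1$-Lipschitz: curves at $\mathcal{AC}$-distance $k\geq 2$ can only be guaranteed to be at $\mathcal{C}$-distance at most about $2k$, because $\mathcal{AC}$-geodesics run through arcs. So your route yields a per-step bound of $6$, hence $6n$ in total, not $2n$; the ``bookkeeping of the constants'' you flag as the main obstacle is not a technicality but exactly where the argument breaks down.

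The way to get the constant $2$ --- and what the paper does --- is to apply the surgery estimate \emph{before} ever measuring distances between the surgered curves: the whole set $\pi_A(\alpha_i)\cup\pi_A(\alpha_{i+1})$ has $\mathcal{AC}(X)$-diameter at most $1$, since all of its members can be realized mutually disjointly, and \cite[Lemma 2.2]{MM2} says precisely that the $\pi_0$-image of a set of $\mathcal{AC}$-diameter at most $1$ has $\mathcal{C}(X)$-diameter at most $2$. Concretely, for disjoint arcs $a'$ and $b'$, the curves $\partial N(a'\cup\partial X)$ and $\partial N(b'\cup\partial X)$ both lie in the common neighborhood $N(a'\cup b'\cup\partial X)$, and one bounds their distance through an essential boundary component of that neighborhood (with a separate check when $a'\cup b'\cup\partial X$ fills $X$). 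This single application of the disjoint-arc surgery estimate is what produces the $2$; it cannot be recovered by composing your three disjointness estimates with the $\mathcal{AC}$-to-$\mathcal{C}$ quasi-isometry constant.
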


\begin{proof}
Since $d_S(\alpha_i,\alpha_{i+1})=1$ and every $\alpha_i$ cuts $X$, we have $${\rm diam}_{\mathcal{AC}(X)}(\pi_A(\alpha_i), \pi_A(\alpha_{i+1}))\leq1$$ for every $i=0,1,\dots,n-1$. 
This together with \cite[Lemma 2.2]{MM2} implies 
$${\rm diam}_{X}(\pi_0(\pi_A(\alpha_i)), \pi_0(\pi_A(\alpha_{i+1})))(={\rm diam}_X(\pi_X(\alpha_i),\pi_X(\alpha_{i+1})))\leq2.$$ 
Since ${\rm diam}_{X}(\pi_{X}(\alpha_{0}), \pi_{X}(\alpha_{n}))\leq\displaystyle\sum_{i=0}^{n-1}{\rm diam}_{X}(\pi_{X}(\alpha_{i}), \pi_{X}(\alpha_{i+1}))$, this implies 
$${\rm diam}_{X}(\pi_{X}(\alpha_{0}), \pi_{X}(\alpha_{n}))\leq 2n.$$
\end{proof}

\begin{remark}\label{rmk-non-sep}
{\rm 
Let $X$ be an essential subsurface of $S$. Suppose that $X$ is disconnected, and at least two components of $X$ are non-simple.
Then for any pair of curves $\alpha,\alpha'$ on $S$ we have ${\rm diam}_X(\pi_X(\alpha),\pi_X(\alpha'))\leq 2$.
To be precise, let $X_1$ be one of the non-simple components of $X$, and $X_2$ the union of the others.
Let $a$ and $a'$ be elements of $\pi_X(\alpha)$ and $\pi_X(\alpha')$, respectively.
If both $a$ and $a'$ are contained in $X_i$ for some $i=1,2$, say $X_1$, then we can find a curve on $X_2$ that is disjoint from $a\cup a'$, which implies $d_X(a,a')\leq 2$. 
If $a\subset X_1$ and $a'\subset X_2$ (or $a\subset X_2$ and $a'\subset X_1$), we have $d_X(a,a')\leq 1$.
Thus $d_X(a,a')\leq 2$ for any pair of elements $a\in\pi_X(\alpha)$ and $a'\in\pi_X(\alpha')$, 
and hence we have ${\rm diam}_X(\pi_X(\alpha),\pi_X(\alpha'))\leq 2$.
}
\end{remark}

\section{Disk complexes}\label{sec-disk-complex}

Let $\mathcal{D}(V)\,(\subset \mathcal{C}^0(\partial_+V))$ be the disk complex of a compression body $V$.
We have a decomposition $\mathcal{D}(V)=\mathcal{D}_{\rm nonsep}(V)\sqcup \mathcal{D}_{\rm sep}(V)$, where $\mathcal{D}_{\rm nonsep}(V)$ (resp. $\mathcal{D}_{\rm sep}(V)$) denotes the subset of $\mathcal{D}(V)$ consisting of the vertices with representatives bounding non-separating (resp. separating) essential disks of $V$.
In this section, we prove the following proposition.

\begin{proposition}\label{prop-disk-complex}
Let $V$ be a compression body obtained by adding a 1-handle to $F \times [0,1]$, where $F$ is a genus-$(g-1)$ closed orientable surface ($g\geq 2$). Then we have the following.
\begin{enumerate}
\item $\mathcal{D}_{\rm nonsep}(V)$ consists of a point, say $c_0$.
\item For each element $c_{\alpha}$ of $\mathcal{D}_{\rm sep}(V)$, there is a 1-simplex in $\mathcal{C}(\partial_+V)$ joining $c_0$ and $c_{\alpha}$.
\end{enumerate}
\end{proposition}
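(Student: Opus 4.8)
The plan is to carry out the whole argument with the cocore $D_0$ of the attached $1$-handle. Its boundary $c_0=\partial D_0$ is a non-separating curve in $\partial_+V$, so it is a vertex of $\mathcal{D}_{\mathrm{nonsep}}(V)$, and cutting $V$ along $D_0$ recovers the product $W:=F\times[0,1]$, on which the two feet of the handle leave two scar disks $E_+,E_-$ in $F\times\{1\}$ whose boundaries are both isotopic to $c_0$ across the side annulus of the handle. The feature I will exploit repeatedly is that, since $F$ has positive genus, the surface $F\times\{1\}$ is incompressible in $W$, so every disk properly embedded in $W$ with boundary in $F\times\{1\}$ is boundary-parallel. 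I will first prove (1) by showing that every non-separating essential disk is isotopic to $D_0$, and then prove (2) by a genus count.

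For (1), let $D$ be a non-separating essential disk and isotope it to minimize $|D\cap D_0|$. Circles of intersection are removed using the irreducibility of the compression body $V$. If an arc of intersection remains, I take an arc outermost in $D$, cutting off a half-disk $\delta\subset D$ meeting $D_0$ only along that arc; regarded in $W$, the piece $\delta$ has boundary in $F\times\{1\}$, hence is boundary-parallel, and an innermost such $\delta$ bounds a ball across which $D$ may be isotoped so as to slide the arc off a scar, strictly reducing $|D\cap D_0|$. Thus $D$ may be isotoped to be disjoint from $D_0$, so that $D\subset W$ with $\partial D\subset F\times\{1\}$. Then $\partial D$ bounds a disk $\beta$ in $F\times\{1\}$; if $\beta$ contained no scar then $\partial D$ would be inessential in $\partial_+V$, and if it contained both scars then $D$ would be separating, so $\beta$ contains exactly one scar. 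This forces $\partial D$ to be isotopic to $c_0$, and hence $D$ to be isotopic to $D_0$, proving that $\mathcal{D}_{\mathrm{nonsep}}(V)=\{c_0\}$.

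For (2), let $D$ be a separating essential disk with $\partial D=c_\alpha$. Cutting $V$ along $D$ yields two compression bodies $V_1,V_2$, where $V_1$ contains $\partial_-V=F$ and $V_2$ is a handlebody; capping the two copies of $D$ shows that the genera add, $g=g(V_1)+g(V_2)$. Since $\partial_-V_1$ has genus $g-1$ we have $g(V_1)\geq g-1$, and since $c_\alpha$ is essential $V_2$ is not a ball, so $g(V_2)\geq 1$. Hence $g(V_1)=g-1$ and $g(V_2)=1$, which forces $V_1$ to be the trivial compression body $F\times[0,1]$ and $V_2$ to be a solid torus. As $V_1$ is a product it contains no essential disk, so after minimizing $|D_0\cap D|$ and pushing the boundary-parallel pieces of $D_0\cap V_1$ across $D$, I may isotope $D_0$ entirely into $V_2$. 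Then $c_0=\partial D_0$ lies in the interior of $\Sigma_2:=\partial_+V\cap V_2$, whose boundary is $c_\alpha$; therefore $c_0$ and $c_\alpha$ are disjoint and span a $1$-simplex of $\mathcal{C}(\partial_+V)$, as required.

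The main obstacle is the reduction in (1) that makes $D$ disjoint from $D_0$ while \emph{preserving the isotopy class of $D$}: in a general handlebody two disks need not be separable, and what rescues us here is exactly the incompressibility of $F\times\{1\}$ in the product $W$, which makes every piece of $D$ boundary-parallel and thus available for an innermost-ball isotopy. By contrast, part (2) is comparatively soft, turning entirely on the fact that $V$ is built with a single $1$-handle, so that the genus count pins down $V_2$ as a solid torus and $V_1$ as a product.
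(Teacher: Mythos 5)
Your proof of part (1) breaks down at exactly the step you flag as the main obstacle, and the fix you offer does not work. From the boundary-parallelism of the outermost half-disk $\delta$ in $W=F\times[0,1]$ you get a parallelism disk $\Gamma\subset F\times\{1\}$ with $\partial\Gamma=\partial\bar\delta$, and you assert that pushing $D$ across the parallelism ball "slides the arc off a scar, strictly reducing $|D\cap D_0|$". This is only true when $\Gamma$ does \emph{not} contain the other scar. If $\Gamma$ contains $E_-$ (which is perfectly possible), then in $V$ the push would have to drag $D$ across the region where the $1$-handle is attached, equivalently across $D_0$ itself: the arc $\delta\cap D_0$ is removed but new intersections appear (an arc parallel to part of $\partial E_+$ and a curve near $\partial E_-$), and $|D\cap D_0|$ does not decrease. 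Your qualifier "an innermost such $\delta$" does not rescue this: if every outermost half-disk of $D$ has its parallelism disk containing the other scar, then even the innermost one is blocked, and ruling out this configuration is precisely the nontrivial content of the paper's proof of its Lemma \ref{lem-nonsepa}. The paper first observes (as you should) that minimality forces $D^-\subset\Gamma_1$, and then takes \emph{two adjacent} outermost pieces $\Delta_1,\Delta_2$ joined by an arc $\beta\subset\partial D'$, shows each subarc of $\beta$ between consecutive intersection points must run from one scar to the other, deduces that $\Delta_2$ meets the opposite scar from $\Delta_1$, and derives a contradiction from the resulting nesting of $\Gamma_1$ and $\Gamma_2$. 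Nothing in your write-up replaces this argument, so part (1) has a genuine gap.

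Part (2) inherits the same problem. Your genus count correctly identifies $V_1\cong F\times[0,1]$ and $V_2$ as a solid torus (this part is fine, and is a nice observation not made explicit in the paper), but the step "pushing the boundary-parallel pieces of $D_0\cap V_1$ across $D$" is again not a reduction: if a piece $P$ of $D_0\cap V_1$ is parallel to a disk $Q\subset\Sigma_1\cup D$, then pushing $P$ across the parallelism region removes the arcs $\partial P\cap D$ but creates one new arc of $D_0\cap D$ for each arc of $c_\alpha\cap{\rm int}(Q)$, and these numbers are equal, so $|D_0\cap D|$ is unchanged and the process need not terminate. (The paper instead proves its Lemma \ref{lem-sepa} by running the same two-outermost-disks argument on the separating disk.) Your part (2) could actually be repaired cheaply once part (1) is established: the meridian disk of the solid torus $V_2$ is a non-separating essential disk of $V$ disjoint from $D$, so by the uniqueness in part (1) it is isotopic to $D_0$, giving the required $1$-simplex between $c_0$ and $c_\alpha$. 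But as written, both parts rest on an intersection-reduction claim that is false in the critical case, so the proposal does not constitute a proof.
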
 

\begin{remark}
{\rm
In fact, we can see that $\mathcal{D}_{\rm sep}(V)$ is a countable, infinite set and that there is no 1-simplex between $c_{\alpha}$ and $c_{\alpha'}$ for each pair $c_{\alpha},c_{\alpha'}\in \mathcal{D}_{\rm sep}(V)$.
}
\end{remark}

In the remaining of this section, $V$ denotes a compression body obtained by adding a 1-handle to $F \times [0,1]$, where $F$ is a genus-$(g-1)$ closed orientable surface ($g\geq 2$). 
Then $D$ denotes the essential disk of $V$ corresponding to the co-core of the 1-handle.
Proposition \ref{prop-disk-complex} follows from Lemmas \ref{lem-nonsepa} and \ref{lem-sepa} below.

\begin{lemma}\label{lem-nonsepa}
Any non-separating disk properly embedded in $V$ is ambient isotopic to $D$. 
\end{lemma}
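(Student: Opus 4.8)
The plan is to study how an arbitrary non-separating essential disk $E$ meets the fixed co-core disk $D$, to drive the geometric intersection $|E\cap D|$ down to zero by standard innermost/outermost moves, and then to exploit the product structure of $V$ cut along $D$ to conclude that $E$ is isotopic to $D$. The key structural fact I will use throughout is that cutting $V$ along $D$ returns the product $W = F\times[0,1]$ (the co-core splits the single $1$-handle, which can then be absorbed), and that $W$ is both irreducible and $\partial$-irreducible because $F$ is a closed surface of genus $g-1\geq 1$. I first record that $D$ is itself non-separating, since $W$ is connected, and that $V$ is irreducible as a compression body.

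First I would isotope $E$ so that it meets $D$ transversely with $|E\cap D|$ minimal, and remove the closed components of $E\cap D$. Any circle of $E\cap D$ bounds a subdisk of the disk $D$; an innermost such circle, together with the subdisk of $E$ it cuts off, forms a $2$-sphere which bounds a ball by irreducibility of $V$, yielding an isotopy that lowers $|E\cap D|$. Hence I may assume $E\cap D$ consists of arcs. If an arc survives, I take one outermost on $E$, cutting off a subdisk $E_0\subset E$ with $\partial E_0=a\cup b$, where $a=E_0\cap D$ and $b\subset\partial_+V$, and with $\mathrm{int}\,E_0$ disjoint from $D$. Then $E_0$ lies in the cut manifold $W$ with $\partial E_0\subset\partial W$; by $\partial$-irreducibility of $W$ the disk $E_0$ is $\partial$-parallel, and the resulting product region supplies an isotopy of $E$ in $V$ that pushes the arc $a$ off $D$, contradicting minimality. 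Therefore $E\cap D=\emptyset$.

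Finally, with $E\subset W=F\times[0,1]$ a properly embedded disk, $\partial$-irreducibility of $W$ forces $E$ to be $\partial$-parallel: $\partial E$ bounds a disk $\delta$ in $\partial W$ with $E\cup\delta$ bounding a ball $B$. I then use the non-separating hypothesis. Reassembling $V$ reattaches the $1$-handle along its two feet, which appear after the cut as two scar disks of $D$ on $\partial W$. If $\delta$ contains both feet or neither, the handle reconnects $B$ to itself or $W\setminus B$ to itself and $E$ remains separating in $V$; since $E$ is non-separating, exactly one foot lies inside $\delta$. Isotoping $E$ across $B$ onto $\delta$ then identifies $E$ with a disk encircling a single foot of the handle, which is isotopic to the co-core $D$, as desired.

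The step I expect to be the main obstacle is this last paragraph: one must carefully track how $D$ reappears as the two scar disks after cutting, and verify rigorously that the non-separating condition is equivalent to exactly one foot lying in the parallelism region $\delta$, so that the resulting $\partial$-parallel disk is genuinely isotopic to the co-core in $V$ rather than merely in $W$. By comparison the circle- and arc-removal steps are routine applications of irreducibility and $\partial$-irreducibility, so I would keep those brief and concentrate the care on pinning down the role of the handle's feet.
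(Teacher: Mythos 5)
Your arc-removal step contains a genuine gap, and it is precisely the point where the paper's proof does all of its real work. When you take an outermost subdisk $E_0\subset E$ with $E_0\cap D=a$ and view it in the cut manifold $W=F\times[0,1]$, it is true that $E_0$ becomes boundary-parallel: its boundary bounds a disk $\delta$ in $F\times\{1\}$ and $E_0$ is parallel to $\delta$. But $\delta$ necessarily contains one half of the scar (say $D^+$) in which $a$ lies, and it may \emph{also} contain the other scar $D^-$ in its interior. In that case the product region between $E_0$ and $\delta$ does \emph{not} supply an isotopy of $E$ in $V$ reducing $|E\cap D|$: since $D^-$ is glued back to $D^+$ to re-form $D$ in $V$, pushing $E_0$ across this region drags it through $D$, trading the arc $a$ for new intersection curves, so minimality is not contradicted. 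This bad configuration can genuinely occur (an outermost piece of $E$ can be parallel to a disk in $F\times\{1\}$ that engulfs the other foot of the 1-handle), so your one-sentence treatment of the arc case is exactly where the argument breaks; parallelism in the cut manifold does not automatically descend to an intersection-reducing isotopy in $V$.

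The paper's proof is structured around this difficulty. It takes \emph{two} outermost disks $\Delta_1,\Delta_2$ of $D'\setminus D$ that are adjacent along an arc $\beta\subset\partial D'$, shows that if the parallelism disk $\Gamma_1$ for $\Delta_1$ contains the opposite scar $D^-$ then the intersection points of $\beta$ with $D$ must alternate between $D^+$ and $D^-$, and deduces that $\Delta_2$ then meets $D^-$ while its parallelism disk $\Gamma_2$ cannot contain $D^+$; hence the reducing isotopy is available at $\Delta_2$ after all, giving the contradiction. Some argument of this kind is needed to close your gap. Your other steps are fine and agree with the paper: circles of $E\cap D$ are removed by irreducibility, and in the disjoint case your count of how many feet of the handle lie in $\delta$ (exactly one, by non-separation) and the ensuing identification of $E$ with the co-core are correct. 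Ironically, you predicted that this last paragraph would be the main obstacle; in fact it is routine, and the real obstacle is the arc case you set aside as routine.
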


\begin{proof}
Let $D'$ be a non-separating disk in $V$. 
Assume that $D$ and $D'$ intersect transversely, and $|D\cap D'|$ is minimized up to ambient isotopy class of $D'$. 

Suppose $|D\cap D'|=0$, i.e., $D\cap D'=\emptyset$.
Then $D'$ is properly embedded disk in the manifold obtained from $V$ by cutting along $D$, that is, $F\times [0,1]$.
Since any disk properly embedded in $F\times [0,1]$ is boundary parallel and $D'$ is non-separating in $V$, we see that $D\cup D'$ bounds a product region, and hence $D'$ is ambient isotopic to $D$.

Suppose $|D\cap D'|>0$.
By standard innermost disk arguments, we can see that $D\cap D'$ has no loop components. 
Note that there are at least two components of $D'\setminus D$ which are outermost in $D'$. 
Take a pair of such outermost components, say $\Delta_1$ and $\Delta_2$, which are the next to each other, i.e., there is a subarc $\beta\subset \partial D'$ such that $\beta\cap \Delta_1$ is an endpoint of $\beta$ and $\beta\cap\Delta_2$ is the other endpoint of $\beta$, and $\beta$ does not intersect any other outermost disk of $D'\setminus D$.
Note that we can retrieve $F\times [0,1]$ by cutting $V$ along $D$.
Let $D^{+}, D^{-}$ be the copies of $D$ in $F \times\{1\}$, and let $\overline{\Delta}_{1}$ (resp. $\overline{\Delta}_{2}$) be the closure of the image of $\Delta_{1}$ (resp. $\Delta_{2}$) in $F\times [0,1]$. 
Note that $\overline{\Delta}_{1}$ and $\overline{\Delta}_{2}$ are disks properly embedded in $F\times [0,1]$, and $\overline{\Delta}_{i}\cap(D^+\cup D^-)$ consists of an arc properly embedded in $D^+\cup D^-$.
Let $\Gamma_i$ $(i=1,2)$ be the disk in $F\times\{1\}$ such that $\partial\Gamma_i=\partial\overline{\Delta}_{i}$.
Without loss of generality, we may suppose $\overline{\Delta}_{1}\cap(D^+\cup D^-)=\overline{\Delta}_{1}\cap D^+$.
Note that if $D^-$ is not contained in $\Gamma_1$, we can isotope $D'$ in $V$ via the product region between $\overline{\Delta}_{1}$ and $\Gamma_1$ to reduce $|D\cap D'|$, a contradiction.
Hence, $D^-$ is contained in $\Gamma_1$.
Let $\beta$ be the arc in $\partial D'$ as above.
Then $\beta\cap D$ consists of finite number of points, say $p_0,p_1,\dots,p_n$, where $\partial\beta=\{p_0,p_n\}$, $p_0\in\partial\overline{\Delta}_{1}$, $p_n\in\partial\overline{\Delta}_{2}$, and $p_0,p_1,\dots,p_n$ are arrayed on $\beta$ in this order.
Then a small neighborhood of $p_0$ in $\beta$ is contained in a small neighborhood of $D^-$ in $F\times[0,1]$.
If the other endpoint of the subarc $\overline{p_0p_1}$ of $\beta$ is contained in $\partial D^-$, then we see that the subarc $\overline{p_0p_1}$ is an inessential arc in ${\rm Cl}(F\times\{1\}\setminus (D^+\cup D^-))$.
This shows that we can reduce $|D\cap D'|$ by an isotopy on $D'$, a contradiction.
By applying the same argument successively, we see that each subarc $\overline{p_{i-1}p_i}$ $(i=1,2,\dots,n)$ joins $D^+$ and $D^-$, and particularly, a small neighborhood of $p_n$ in $\beta$ is contained in a small neighborhood of $D^+$. This shows that $\overline{\Delta}_2\cap(D^+\cup D^-)=\overline{\Delta}_2\cap D^-$.
Then we see that $D^+$ is not contained in $\Gamma_2$, hence we have a contradiction by using the argument as above.
\end{proof}

Let $D'$ be a separating essential disk properly embedded in $V$.
By an argument similar to that in the proof of Lemma \ref{lem-nonsepa}, we can see that $D'$ is ambient isotopic to a disk disjoint from $D$.
Hence, we have the following lemma.

\begin{lemma}\label{lem-sepa}
Any separating essential disk properly embedded in $V$ can be isotoped to be disjoint from the non-separating disk $D$. 
\end{lemma}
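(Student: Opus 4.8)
The plan is to follow the proof of Lemma \ref{lem-nonsepa} almost verbatim, the point being that the reduction of $|D\cap D'|$ to zero carried out there never actually uses that $D'$ is non-separating. Put $D$ and $D'$ in transverse position with $|D\cap D'|$ minimal among all disks ambient isotopic to $D'$. First I would observe that $D\cap D'$ has no loop components: an innermost loop of intersection bounds a subdisk of $D$ whose interior is disjoint from $D'$, and since $D$ is a disk this loop also bounds on $D'$, so one may isotope $D'$ across this subdisk to reduce $|D\cap D'|$. This step only uses that $D$ is a disk and is insensitive to the separating type of $D'$, so it applies here unchanged.

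Next I would assume $|D\cap D'|>0$ and aim for a contradiction. As in the proof of Lemma \ref{lem-nonsepa}, since $D'$ is a disk and $D\cap D'$ consists of arcs, there are at least two outermost components of $D'\setminus D$; choose a pair $\Delta_1,\Delta_2$ that are adjacent along $\partial D'$. Cutting $V$ along $D$ recovers $F\times[0,1]$ with two copies $D^+,D^-$ of $D$ in $F\times\{1\}$, and the closures $\overline{\Delta}_1,\overline{\Delta}_2$ are boundary-parallel disks in $F\times[0,1]$, each meeting $D^+\cup D^-$ in a single arc. The entire outermost-arc analysis of Lemma \ref{lem-nonsepa}---normalizing so that $\overline{\Delta}_1\cap(D^+\cup D^-)=\overline{\Delta}_1\cap D^+$, showing that minimality forces $D^-\subset\Gamma_1$, then tracking the intersection points $p_0,\dots,p_n$ along the connecting arc $\beta$ to conclude $\overline{\Delta}_2\cap(D^+\cup D^-)=\overline{\Delta}_2\cap D^-$ and $D^+\not\subset\Gamma_2$---uses only that $D'$ is a disk and the product structure of $F\times[0,1]$. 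Hence it transfers without change and yields the desired contradiction.

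Therefore $|D\cap D'|=0$, that is, $D'$ is ambient isotopic to a disk disjoint from $D$, which is the assertion. I expect the only real content to be the bookkeeping needed to confirm that the separating hypothesis genuinely plays no role in the intersection-reduction step: the single place where non-separating-ness entered the proof of Lemma \ref{lem-nonsepa} was the final sentence of its $|D\cap D'|=0$ case, where it was used to upgrade disjointness from $D$ to the stronger conclusion that $D'$ is isotopic to $D$. Here we do not need that upgrade---disjointness from $D$ is exactly what is claimed---so that is precisely the step we drop, and nothing else in the argument requires modification.
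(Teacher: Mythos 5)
Your proposal is correct and is exactly the paper's proof: the paper disposes of Lemma \ref{lem-sepa} with the one-line remark that ``an argument similar to that in the proof of Lemma \ref{lem-nonsepa}'' shows $D'$ can be isotoped off $D$, which is precisely your observation that the intersection-reduction argument never uses that $D'$ is non-separating. You correctly identify that the non-separating hypothesis enters only in the $|D\cap D'|=0$ case of Lemma \ref{lem-nonsepa} (to upgrade disjointness to parallelism with $D$), a step that is simply dropped here.
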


\section{A pair of curves with distance exactly $n$}\label{sec-curves}

In this section, for each integer $n\geq 3$, we construct pairs of curves with distance exactly $n$. 
Let $S$ be a closed connected orientable surface with genus greater than or equal to $2$.
We first prove Propositions \ref{prop1} and \ref{prop2}.
Then we describe the constructions of paths in $\mathcal{C}(S)$ of length $n$ and show that they are geodesics in $\mathcal{C}(S)$.

\begin{proposition}\label{prop1}
For an integer $n(\geq 4)$,
let $[\alpha_0, \alpha_1,\dots,\alpha_n]$ be a path in $\mathcal{C}(S)$ satisfying the following.
\begin{itemize}
\item[(H1)] $[\alpha_0,\dots,\alpha_{n-2}]$ and $[\alpha_{n-2},\alpha_{n-1},\alpha_n]$ are geodesics in $\mathcal{C}(S)$,
\item[(H2)] ${\rm diam}_{X_{n-2}}(\pi_{X_{n-2}}(\alpha_{n-4}),\pi_{X_{n-2}}(\alpha_{n}))\geq 4n$, where $X_{n-2}={\rm Cl}(S\setminus N(\alpha_{n-2}))$.
\end{itemize}
Then $[\alpha_0, \alpha_1,\dots,\alpha_n]$ is a geodesic in $\mathcal{C}(S)$.
\end{proposition}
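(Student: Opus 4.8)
The plan is to argue by contradiction: suppose $[\alpha_0,\dots,\alpha_n]$ is \emph{not} a geodesic, so $d_S(\alpha_0,\alpha_n)\leq n-1$, and let $[\beta_0,\beta_1,\dots,\beta_m]$ be a geodesic with $\beta_0=\alpha_0$, $\beta_m=\alpha_n$, and $m\leq n-1$. The strategy is to feed this short geodesic into the subsurface projection machinery of Lemma \ref{subsurface distance}, using the subsurface $X_{n-2}={\rm Cl}(S\setminus N(\alpha_{n-2}))$. The key quantitative input is the lower bound (H2), which says the projections of $\alpha_{n-4}$ and $\alpha_n$ to $X_{n-2}$ are far apart (at least $4n$), while Lemma \ref{subsurface distance} gives an \emph{upper} bound of $2m'$ on the projection-diameter along any path all of whose vertices cut $X_{n-2}$. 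Comparing these two bounds should force a contradiction once $m$ is small.

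Concretely, I would first use (H1): since $[\alpha_0,\dots,\alpha_{n-2}]$ is a geodesic, the initial segment $[\alpha_0,\dots,\alpha_{n-4}]$ has length $n-4$, and I want to relate $\pi_{X_{n-2}}(\alpha_0)$ to $\pi_{X_{n-2}}(\alpha_{n-4})$. A vertex $\alpha_i$ \emph{misses} $X_{n-2}$ precisely when it is disjoint from $\alpha_{n-2}$ (up to isotopy), i.e.\ when $d_S(\alpha_i,\alpha_{n-2})\leq 1$; by the geodesic property in (H1) this only happens for $i=n-3,n-2,n-1$ (and these are exactly the vertices one must treat separately, since the projection is only controlled on vertices that cut $X_{n-2}$). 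So along $[\alpha_0,\dots,\alpha_{n-4}]$ every vertex cuts $X_{n-2}$, giving ${\rm diam}_{X_{n-2}}(\pi_{X_{n-2}}(\alpha_0),\pi_{X_{n-2}}(\alpha_{n-4}))\leq 2(n-4)$ by Lemma \ref{subsurface distance}.

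Next I would do the same for the hypothetical short geodesic $[\beta_0,\dots,\beta_m]$. The delicate point is which $\beta_j$ might miss $X_{n-2}$: a priori several could be disjoint from $\alpha_{n-2}$. But a vertex disjoint from $\alpha_{n-2}$ lies within distance $1$ of $\alpha_{n-2}$ in $\mathcal{C}(S)$, so the indices $j$ for which $\beta_j$ misses $X_{n-2}$ form a block whose diameter in the geodesic is controlled (at most two consecutive such indices, since two vertices each disjoint from $\alpha_{n-2}$ are within distance $2$ of each other, while consecutive $\beta_j$'s are distance $1$ apart). Removing at most a bounded number of missing vertices from the ends, Lemma \ref{subsurface distance} bounds ${\rm diam}_{X_{n-2}}(\pi_{X_{n-2}}(\beta_0),\pi_{X_{n-2}}(\beta_m))$ by roughly $2m\leq 2(n-1)$. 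Combining the two triangle-inequality estimates, ${\rm diam}_{X_{n-2}}(\pi_{X_{n-2}}(\alpha_{n-4}),\pi_{X_{n-2}}(\alpha_n))$ is bounded above by something like $2(n-4)+2(n-1)+(\text{bounded error}) < 4n$, contradicting (H2).

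The main obstacle I anticipate is the careful bookkeeping of exactly which vertices of each path miss $X_{n-2}$, and how large the resulting ``error'' terms are when one splices together the projections across those missing vertices. One must verify that the block of missing indices on the short geodesic $[\beta_0,\dots,\beta_m]$ is genuinely bounded (this uses that $\alpha_0=\beta_0$ and $\alpha_n=\beta_m$ themselves cut $X_{n-2}$, which follows from (H1) and (H2) since their projections are defined and far apart), and that the total additive loss is small enough that the constant $4n$ in (H2) comfortably beats the upper bound $2(n-4)+2(n-1)=4n-10$ plus the error. Getting the constant $4n$ to win is precisely why the hypothesis is stated with that coefficient rather than a smaller one, so the estimate should close with room to spare.
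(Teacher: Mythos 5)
Your overall strategy --- playing the upper bound of Lemma \ref{subsurface distance} along a hypothetical short geodesic against the lower bound (H2) via the triangle inequality --- is exactly the paper's, but your treatment of the vertices near $\alpha_{n-2}$ contains a genuine gap, and it starts from a mischaracterization of what it means to miss $X_{n-2}$. A curve misses $X_{n-2}={\rm Cl}(S\setminus N(\alpha_{n-2}))$ only if it can be isotoped into the annulus $N(\alpha_{n-2})$, i.e., only if it is isotopic to $\alpha_{n-2}$ itself; a curve that is merely disjoint from $\alpha_{n-2}$ (distance $1$) can be isotoped into $X_{n-2}$, hence \emph{cuts} it and has a nonempty, well-defined projection. (The related subtlety, recorded in Remarks \ref{rmk-non-sep} and \ref{rem-non-sep-1}, is that (H2) forces $\alpha_{n-2}$ to be non-separating, so that $X_{n-2}$ is connected.) Because of this, the ``block of missing indices'' you worry about on $[\beta_0,\dots,\beta_m]$ is spurious; but, more importantly, the repair you propose for it would fail. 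There is no ``bounded error'' when splicing projections across a vertex that genuinely misses the subsurface: if $\beta_j$ misses $X_{n-2}$, Lemma \ref{subsurface distance} gives no relation whatsoever between $\pi_{X_{n-2}}(\beta_{j-1})$ and $\pi_{X_{n-2}}(\beta_{j+1})$, and this jump can be arbitrarily large --- two curves disjoint from $\alpha_{n-2}$ can have enormous projection distance to $X_{n-2}$ (this unbounded jumping is precisely the mechanism by which the paper builds its examples in Section \ref{sec-curves}, via homeomorphisms preserving $N(\alpha_{n-2})$). Moreover, such a missing vertex need not sit at an end of the geodesic, so ``removing missing vertices from the ends'' does not address it.

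The correct resolution, which is what the paper does, is a dichotomy rather than a splice. Either no $\beta_j$ is isotopic to $\alpha_{n-2}$, in which case \emph{every} $\beta_j$ cuts $X_{n-2}$, Lemma \ref{subsurface distance} applies to the whole geodesic, and your estimate $2(n-4)+2m<4n$ contradicts (H2); or some $\beta_j$ is isotopic to $\alpha_{n-2}$, and then (H1) pins down its position exactly: $j=d_{S}(\beta_0,\beta_j)=d_S(\alpha_0,\alpha_{n-2})=n-2$ and $m-j=d_S(\beta_j,\beta_m)=d_S(\alpha_{n-2},\alpha_n)=2$, whence $m=n$. In your proof-by-contradiction framing (assuming $m\leq n-1$) the second horn is impossible, so all $\beta_j$ cut $X_{n-2}$ and no splicing is ever needed. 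Once you replace your criterion ``misses iff disjoint from $\alpha_{n-2}$'' by the correct ``misses iff isotopic to $\alpha_{n-2}$'' and insert this two-line dichotomy, your argument closes and essentially coincides with the paper's proof.
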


\begin{remark}\label{rem-non-sep-1}
{\rm
In Proposition \ref{prop1}, we note that $X_{n-2}$ is connected, i.e., $\alpha_{n-2}$ is non-separating in $S$. This can be shown by using Remark \ref{rmk-non-sep} together with the condition (H2).
}
\end{remark}

\begin{proof}[Proof of Proposition \ref{prop1}] 
Let $[\beta_0,\beta_1,\dots,\beta_m]$ be a geodesic in $\mathcal{C}(S)$ such that $\beta_{0}=\alpha_0$, $\beta_{m}=\alpha_n$. 
Then note that $m\leq n$.  

\begin{claim}\label{claim1}
$\beta_{j}=\alpha_{n-2}$ for some $j\in\{0,1,\dots,m\}$.
\end{claim}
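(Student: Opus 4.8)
The plan is to argue by contradiction, exploiting the fact that the complement $X_{n-2}=\mathrm{Cl}(S\setminus N(\alpha_{n-2}))$ detects whether a geodesic passes through $\alpha_{n-2}$. The key observation is that an essential curve $\gamma$ on $S$ \emph{misses} $X_{n-2}$ precisely when $\gamma$ lies in the annular neighborhood $N(\alpha_{n-2})$, i.e.\ when $\gamma$ is isotopic to $\alpha_{n-2}$; every other essential curve \emph{cuts} $X_{n-2}$. Hence, if the claim fails, then no vertex of the geodesic $[\beta_0,\dots,\beta_m]$ equals $\alpha_{n-2}$, so \emph{every} $\beta_j$ cuts $X_{n-2}$, which is exactly the hypothesis needed to apply Lemma~\ref{subsurface distance}.

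First I would apply Lemma~\ref{subsurface distance} to the geodesic $[\beta_0,\dots,\beta_m]$. Since $\beta_0=\alpha_0$, $\beta_m=\alpha_n$ and $m\le n$, this gives
\[
\mathrm{diam}_{X_{n-2}}\big(\pi_{X_{n-2}}(\alpha_0),\pi_{X_{n-2}}(\alpha_n)\big)\le 2m\le 2n.
\]
Next I would produce a companion bound between $\alpha_0$ and $\alpha_{n-4}$. Because $[\alpha_0,\dots,\alpha_{n-2}]$ is a geodesic by (H1), each $\alpha_i$ with $i\le n-4$ satisfies $d_S(\alpha_i,\alpha_{n-2})\ge 2$, so $\alpha_i$ cannot be realized disjointly from $\alpha_{n-2}$ and therefore cuts $X_{n-2}$. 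Applying Lemma~\ref{subsurface distance} to the subpath $[\alpha_0,\dots,\alpha_{n-4}]$, whose length is $n-4$, yields
\[
\mathrm{diam}_{X_{n-2}}\big(\pi_{X_{n-2}}(\alpha_0),\pi_{X_{n-2}}(\alpha_{n-4})\big)\le 2(n-4).
\]

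Finally I would combine these two estimates using the triangle-type inequality for $\mathrm{diam}$, which is available here because $\mathrm{diam}_{X}(A,B)=\mathrm{diam}(A\cup B)$:
\[
\mathrm{diam}_{X_{n-2}}\big(\pi_{X_{n-2}}(\alpha_{n-4}),\pi_{X_{n-2}}(\alpha_n)\big)\le 2(n-4)+2n=4n-8<4n,
\]
which contradicts hypothesis (H2). This contradiction forces some $\beta_j$ to coincide with $\alpha_{n-2}$, proving the claim. The step I expect to be the only genuine subtlety is justifying the dichotomy ``$\gamma$ misses $X_{n-2}$ if and only if $\gamma$ is isotopic to $\alpha_{n-2}$,'' so that the failure of the claim really does make every $\beta_j$ cut $X_{n-2}$; once that is in hand, the remainder is bookkeeping with Lemma~\ref{subsurface distance} and the numerical slack $4n-8<4n$ deliberately built into (H2).
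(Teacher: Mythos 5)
Your proof is correct and follows essentially the same route as the paper's: assume no $\beta_j$ equals $\alpha_{n-2}$, observe that then every vertex of the geodesic $[\beta_0,\dots,\beta_m]$ and of the subpath $[\alpha_0,\dots,\alpha_{n-4}]$ cuts $X_{n-2}$, apply Lemma~\ref{subsurface distance} to both, and contradict (H2) via the triangle inequality for diameters. The only cosmetic differences are that you justify ``not isotopic to $\alpha_{n-2}$ implies cuts $X_{n-2}$'' directly by the annulus dichotomy where the paper cites Remark~\ref{rem-non-sep-1}, and your bound $2(n-4)+2n=4n-8<4n$ is the same bookkeeping as the paper's $<2n+2m\leq 4n$.
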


\begin{proof}
Assume on the contrary that $\beta_{j}\neq \alpha_{n-2}$ for any $j$.
Then, by Remark \ref{rem-non-sep-1}, every $\beta_{j}$ cuts $X_{n-2}$. 
By Lemma \ref{subsurface distance}, we have ${\rm diam}_{X_{n-2}}(\pi_{X_{n-2}}(\beta_{0}), \pi_{X_{n-2}}(\beta_{m}))\leq2m$. 
On the other hand, since $[\alpha_0, \alpha_1,\dots,\alpha_{n-2}]$ is a geodesic, no $\alpha_i$ $(0\leq i\leq n-3)$ is isotopic to $\alpha_{n-2}$. 
Hence each $\alpha_i$ $(0\leq i\leq n-3)$ cuts $X_{n-2}$. 
By Lemma \ref{subsurface distance}, we have ${\rm diam}_{X_{n-2}}(\pi_{X_{n-2}}(\alpha_{0}), \pi_{X_{n-2}}(\alpha_{n-4}))\leq 2(n-4)<2n.$
These imply
\begin{align*}
  {\rm diam}_{X_{n-2}}(\pi_{X_{n-2}}(\alpha_{n-4}), \pi_{X_{n-2}}(\alpha_{n})) &\leq {\rm diam}_{X_{n-2}}(\pi_{X_{n-2}}(\alpha_{n-4}), \pi_{X_{n-2}}(\alpha_{0}))\\&\hspace{5mm}+{\rm diam}_{X_{n-2}}(\pi_{X_{n-2}}(\alpha_{0}), \pi_{X_{n-2}}(\alpha_{n}))\\
  &< 2n+{\rm diam}_{X_{n-2}}(\pi_{X_{n-2}}(\beta_{0}), \pi_{X_{n-2}}(\beta_{m}))\\
  &\leq 2n+2m\\  &\leq 4n.
\end{align*}
This contradicts the hypothesis (H2).
\end{proof}

By Claim \ref{claim1} and the hypothesis (H1), we have the equalities
\begin{eqnarray*}
  j=d_{S}(\beta_{0}, \beta_{j})&=&d_S(\alpha_0,\alpha_{n-2})=n-2,\\
  m-j=d_{S}(\beta_{j}, \beta_{m})&=&d_{S}(\alpha_{n-2}, \alpha_{n})=2.
\end{eqnarray*}
By combining the above equalities, we have $m=n$. 
Recall that $[\beta_0, \beta_1,\dots,\beta_m]$ is a geodesic in $\mathcal{C}(S)$ with $\beta_0=\alpha_0$ and $\beta_m=\alpha_n$.
Hence, $[\alpha_0, \alpha_1,\dots,\alpha_n]$ is a geodesic in $\mathcal{C}(S)$.
\end{proof}

\begin{proposition}\label{prop2}
For an integer $n(\geq 3)$,
let $[\alpha_0, \alpha_1,\dots,\alpha_n]$ be a path in $\mathcal{C}(S)$ satisfying the following.
\begin{itemize}
\item[(H1')] $[\alpha_0,\dots,\alpha_{n-1}]$ and $[\alpha_{n-2},\alpha_{n-1},\alpha_n]$ are geodesics in $\mathcal{C}(S)$,
\item[(H2')] $\alpha_{n-2}\cup\alpha_{n-1}$ is non-separating in $S$, and ${\rm diam}_{S'}(\pi_{S'}(\alpha_{0}),\pi_{S'}(\alpha_{n}))>2n$, where $S'={\rm Cl}(S\setminus N(\alpha_{n-2}\cup\alpha_{n-1}))$.
\end{itemize}
Then $[\alpha_0, \alpha_1,\dots,\alpha_n]$ is a geodesic in $\mathcal{C}(S)$.
\end{proposition}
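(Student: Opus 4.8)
The plan is to mimic the proof of Proposition \ref{prop1}, the point being that (H2$'$) now controls the subsurface projections of the two \emph{endpoints} $\alpha_0,\alpha_n$ directly (rather than of $\alpha_{n-4}$ and $\alpha_n$), which should make the argument more direct. First I would record the key topological observation. Since $\alpha_{n-2}\cup\alpha_{n-1}$ is non-separating, $S'={\rm Cl}(S\setminus N(\alpha_{n-2}\cup\alpha_{n-1}))$ is a connected essential subsurface. Because $\alpha_{n-2}$ and $\alpha_{n-1}$ are disjoint (they are adjacent in the geodesic $[\alpha_{n-2},\alpha_{n-1},\alpha_n]$), the neighborhood $N(\alpha_{n-2}\cup\alpha_{n-1})$ is a disjoint union of two annuli, so the only essential simple closed curves missing $S'$ are those isotopic to $\alpha_{n-2}$ or to $\alpha_{n-1}$. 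In particular both endpoints cut $S'$: by (H1$'$) we have $d_S(\alpha_0,\alpha_{n-1})=n-1\geq 2$ and $d_S(\alpha_n,\alpha_{n-2})=2$, so neither $\alpha_0$ nor $\alpha_n$ is isotopic to $\alpha_{n-2}$ or $\alpha_{n-1}$, and hence $\pi_{S'}(\alpha_0)$ and $\pi_{S'}(\alpha_n)$ are defined and (H2$'$) is meaningful.

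Next I would set up the comparison geodesic. Let $[\beta_0,\beta_1,\dots,\beta_m]$ be a geodesic in $\mathcal{C}(S)$ with $\beta_0=\alpha_0$ and $\beta_m=\alpha_n$; since the given path has length $n$, we have $m=d_S(\alpha_0,\alpha_n)\leq n$. The heart of the proof is the claim that some $\beta_j$ misses $S'$, equivalently that $\beta_j\in\{\alpha_{n-2},\alpha_{n-1}\}$. Indeed, if every $\beta_j$ cut $S'$, then Lemma \ref{subsurface distance} would give ${\rm diam}_{S'}(\pi_{S'}(\alpha_0),\pi_{S'}(\alpha_n))={\rm diam}_{S'}(\pi_{S'}(\beta_0),\pi_{S'}(\beta_m))\leq 2m\leq 2n$, contradicting (H2$'$). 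This is where the hypothesis is consumed, and it is notably cleaner than in Proposition \ref{prop1} precisely because (H2$'$) is stated for the endpoints, so no intermediate triangle-inequality splitting of the diameter is required.

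Finally I would finish with a short case analysis on which curve $\beta_j$ realizes. If $\beta_j=\alpha_{n-2}$, then since $[\alpha_0,\dots,\alpha_{n-1}]$ is a geodesic we get $j=d_S(\beta_0,\beta_j)=d_S(\alpha_0,\alpha_{n-2})=n-2$, while $m-j=d_S(\beta_j,\beta_m)=d_S(\alpha_{n-2},\alpha_n)=2$ from the geodesic $[\alpha_{n-2},\alpha_{n-1},\alpha_n]$, whence $m=n$. If instead $\beta_j=\alpha_{n-1}$, then $j=d_S(\alpha_0,\alpha_{n-1})=n-1$ and $m-j=d_S(\alpha_{n-1},\alpha_n)=1$, again giving $m=n$. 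In either case $d_S(\alpha_0,\alpha_n)=m=n$, so $[\alpha_0,\alpha_1,\dots,\alpha_n]$ is a geodesic.

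As for difficulty, the argument is essentially routine once Lemma \ref{subsurface distance} and the characterization of curves missing $S'$ are available. The only steps needing genuine care are that characterization itself, which relies on $S'$ being connected (from the non-separating assumption) and on $\alpha_{n-2},\alpha_{n-1}$ being disjoint, and the bookkeeping of the two cases in the last step; everything else is a direct transcription of the mechanism used in Proposition \ref{prop1}.
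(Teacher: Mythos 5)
Your proof is correct and takes essentially the same route as the paper: the same comparison geodesic $[\beta_0,\dots,\beta_m]$, the same contradiction via Lemma \ref{subsurface distance} to force some $\beta_j$ to equal $\alpha_{n-2}$ or $\alpha_{n-1}$, and the same distance bookkeeping to conclude $m=n$ (the paper writes out only the $\beta_j=\alpha_{n-2}$ case, leaving the other as ``similar,'' which you do explicitly). Your preliminary observation that the only essential curves missing $S'$ are those isotopic to $\alpha_{n-2}$ or $\alpha_{n-1}$ (since their regular neighborhood is two disjoint annuli) is a correct detail that the paper leaves implicit.
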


\begin{proof}
Let $[\beta_0,\beta_1,\dots,\beta_m]$ be a geodesic in $\mathcal{C}(S)$ such that $\beta_{0}=\alpha_{0}$, $\beta_{m}=\alpha_{n}$. 
Then note that $m\leq n$.   

\begin{claim}
There exists $j\in\{0,1,\dots,m\}$ such that $\beta_{j}=\alpha_{n-2}$ or $\beta_{j}=\alpha_{n-1}$. 
\end{claim}

\begin{proof}
Suppose that $\beta_{j}\neq\alpha_{n-2}$ and $\beta_{j}\neq\alpha_{n-1}$ for any $j$. 
Since $\alpha_{n-2}\cup\alpha_{n-1}$ is non-separating in $S$, each $\beta_{j}$ cuts $S'$. 
Hence, by Lemma \ref{subsurface distance}, we have $${\rm diam}_{S'}(\pi_{S'}(\beta_{0}), \pi_{S'}(\beta_{m}))\leq 2m \leq 2n.$$
On the other hand, by (H2'), ${\rm diam}_{S'}(\pi_{S'}(\beta_{0}), \pi_{S'}(\beta_{m}))>2n$, a contradiction.  
\end{proof}

Suppose $\beta_{j}=\alpha_{n-2}$. Then we have the equalities
\begin{eqnarray*}
   j=d_{S}(\beta_{0}, \beta_{j})&=&d_S(\alpha_0,\alpha_{n-2})=n-2,\\
   m-j=d_{S}(\beta_{j}, \beta_{m})&=&d_{S}(\alpha_{n-2}, \alpha_{n})=2.
\end{eqnarray*}
By combining the above equalities, we have $n=m$.
Hence, $[\alpha_0, \alpha_1,\dots,\alpha_n]$ is a geodesic in $\mathcal{C}(S)$.
We can use a similar argument for the case when $\beta_{j}=\alpha_{n-1}$. 
This completes the proof of Proposition \ref{prop2}. 
\end{proof}

\subsection{A construction of a concrete example: the case when $n$ is even}\label{even}

We first assume that $n$ is an even integer with $n\geq 4$.
Let $\alpha_0$, $\alpha_2$ be essential non-separating simple closed curves on $S$ which intersect transversely in one point, and let $\alpha_1$ be an essential simple closed curve on $S$ which is disjoint from $\alpha_0\cup\alpha_2$. 
Let $X_{2}={\rm Cl}(S\setminus N(\alpha_2))$. 
Note that $[\alpha_{0}, \alpha_{1}, \alpha_{2}]$ is a geodesic of length two in $\mathcal{C}(S)$.
Choose a homeomorphism $\varphi_2:S\rightarrow S$ such that $\varphi_2(N(\alpha_2))=N(\alpha_2)$ and that ${\rm diam}_{X_{2}}(\pi_{X_{2}}(\alpha_{0}), \pi_{X_{2}}(\varphi_2(\alpha_{0})))\geq 4n$. 
This is possible by \cite[Proposition 4.6]{MM1}.
Let $\alpha_{3}=\varphi_2(\alpha_{1})$ and $\alpha_{4} =\varphi_2(\alpha_{0})$. 
Note that $[\alpha_{2}, \alpha_{3}, \alpha_{4}]$ is a geodesic of length two in $\mathcal{C}(S)$, and $|\alpha_2\cap\alpha_4|=1$. 

\begin{figure}[htbp]
 \begin{center}
 \includegraphics[width=55mm]{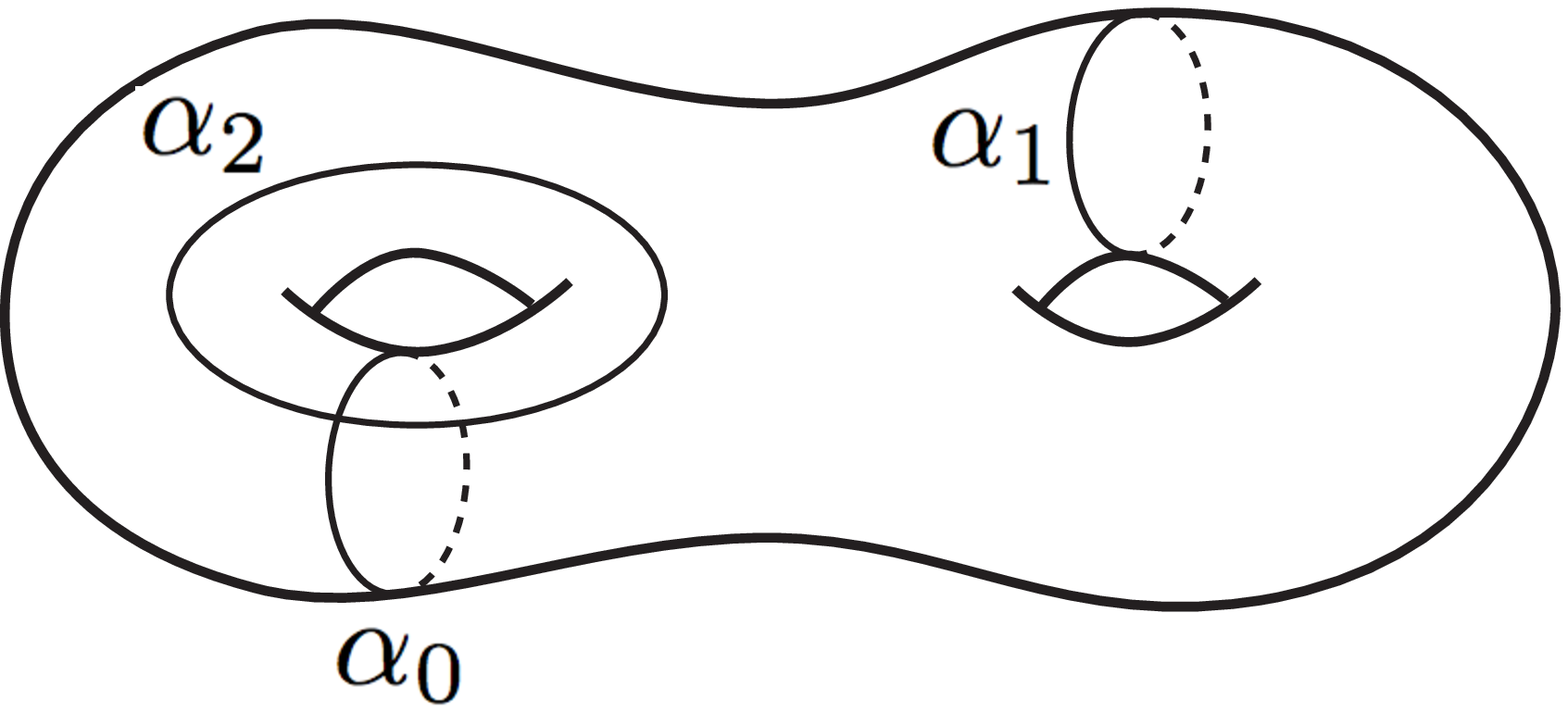}
 \end{center}
 \caption{}
\label{fig:1}
\end{figure}

We repeat this process to construct a path $[\alpha_0,\alpha_1,\dots,\alpha_n]$ inductively as follows.
Suppose we have constructed a path $[\alpha_0,\alpha_1,\dots,\alpha_i]$ with $|\alpha_{i-2}\cap\alpha_i|=1$ for an even integer $i(<n)$.
Let $X_{i}={\rm Cl}(S\setminus N(\alpha_i))$.
Choose a homeomorphism $\varphi_i:S\rightarrow S$ such that $\varphi_i(N(\alpha_i))=N(\alpha_i)$ and that 
\begin{equation}\label{eqn-f_i}
{\rm diam}_{X_{i}}(\pi_{X_{i}}(\alpha_{i-2}), \pi_{X_{i}}(\varphi_i(\alpha_{i-2})))\geq 4n.
\end{equation}
Then we let $\alpha_{i+1}= \varphi_i(\alpha_{i-1})$ and $\alpha_{i+2} = \varphi_i(\alpha_{i-2})$. 
Note that $[\alpha_{i}, \alpha_{i+1}, \alpha_{i+2}]$ is a geodesic of length two in $\mathcal{C}(S)$, and we have obtained the path $[\alpha_0,\alpha_1,\dots,\alpha_i,\alpha_{i+1},\alpha_{i+2}]$ with $|\alpha_{i}\cap\alpha_{i+2}|=1$.

\begin{assertion}\label{prop-even}
For each $k\in\{2, 4,\dots, n\}$, the path $[\alpha_0,\alpha_1,\dots,\alpha_k]$ in $\mathcal{C}(S)$ is a geodesic.
\end{assertion}

\begin{proof}
We prove the proposition by mathematical induction on $k$.
It is clear that $[\alpha_{0}, \alpha_{1}, \alpha_{2}]$ is a geodesic in $\mathcal{C}(S)$.
Hence, Assertion \ref{prop-even} holds for $k=2$.
Assume that $[\alpha_0,\alpha_1,\dots,\alpha_k]$ is a geodesic in $\mathcal{C}(S)$ for some $k\in\{2,4,\dots,n-2\}$.
We note that $[\alpha_{k}, \alpha_{k+1}, \alpha_{k+2}]$ is a geodesic in $\mathcal{C}(S)$.
Furthermore, by the inequality (\ref{eqn-f_i}), we have ${\rm diam}_{X_{k}}(\pi_{X_{k}}(\alpha_{k-2}), \pi_{X_{k}}(\alpha_{k+2}))\geq 4n\geq 4(k+2)$.
Hence, by Proposition \ref{prop1}, the path $[\alpha_0,\alpha_1,\dots,\alpha_{k+2}]$ is a geodesic in $\mathcal{C}(S)$,
which shows that Assertion \ref{prop-even} holds for $k+2$.
This completes the proof of Assertion \ref{prop-even}.
\end{proof}

\subsection{A construction of a concrete example: the case when $n$ is odd}\label{odd}

Suppose that $n$ is an odd integer with $n\geq 3$. 
Let $[\alpha_0,\alpha_1,\dots,\alpha_{n-1}]$ be a geodesic in $\mathcal{C}(S)$ as in the previous subsection. 
Here, in addition, we assume that each $\alpha_{i}$ is a non-separating curve. 
(It is easy to see that this holds if we take a non-separating curve in $S$ for $\alpha_1$ at the beginning of the construction of the geodesic.)
Note that $\alpha_{n-3}$ intersects $\alpha_{n-1}$ transversely in one point and is disjoint from $\alpha_{n-2}$.
Note also that $\alpha_{n-2}$ is non-separating.
It is easy to see that these imply that $\alpha_{n-1}\cup\alpha_{n-2}$ is non-separating.
Choose a non-separating essential simple closed curve $\gamma$ on $S$ such that $\gamma\cap\alpha_{n-1}=\emptyset$ and $\gamma$ intersects $\alpha_{n-2}$ transversely in one point. 
Let $S'={\rm Cl}(S\setminus N(\alpha_{n-2}\cup\alpha_{n-1}))$.
By \cite[Proposition 4.6]{MM1}, there exists a homeomorphism $\varphi:S\rightarrow S$ such that $\varphi(N(\alpha_{n-2}))=N(\alpha_{n-2})$, $\varphi(N(\alpha_{n-1}))=N(\alpha_{n-1})$ and ${\rm diam}_{S'}(\pi_{S'}(\alpha_{0}), \pi_{S'}(\varphi(\gamma)))>2n$. 
Let $\alpha_{n}=\varphi(\gamma)$. 
Note that $\alpha_{n}\cap\alpha_{n-1}=\emptyset$ and 
$\alpha_{n}$ intersects $\alpha_{n-2}$ transversely in one point. 
This fact implies that $[\alpha_{n-2},\alpha_{n-1},\alpha_n]$ is a geodesic in $\mathcal{C}(S)$.
On the other hand, $[\alpha_{0},\dots,\alpha_{n-1}]$ is also a geodesic in $\mathcal{C}(S)$.
Hence, by Proposition \ref{prop2} together with the above inequality ${\rm diam}_{S'}(\pi_{S'}(\alpha_{0}), \pi_{S'}(\varphi(\gamma)))>2n$, we see that the path $[\alpha_0,\alpha_1,\dots,\alpha_n]$ is a geodesic in $\mathcal{C}(S)$.

\begin{figure}[htbp]
 \begin{center}
 \includegraphics[width=55mm]{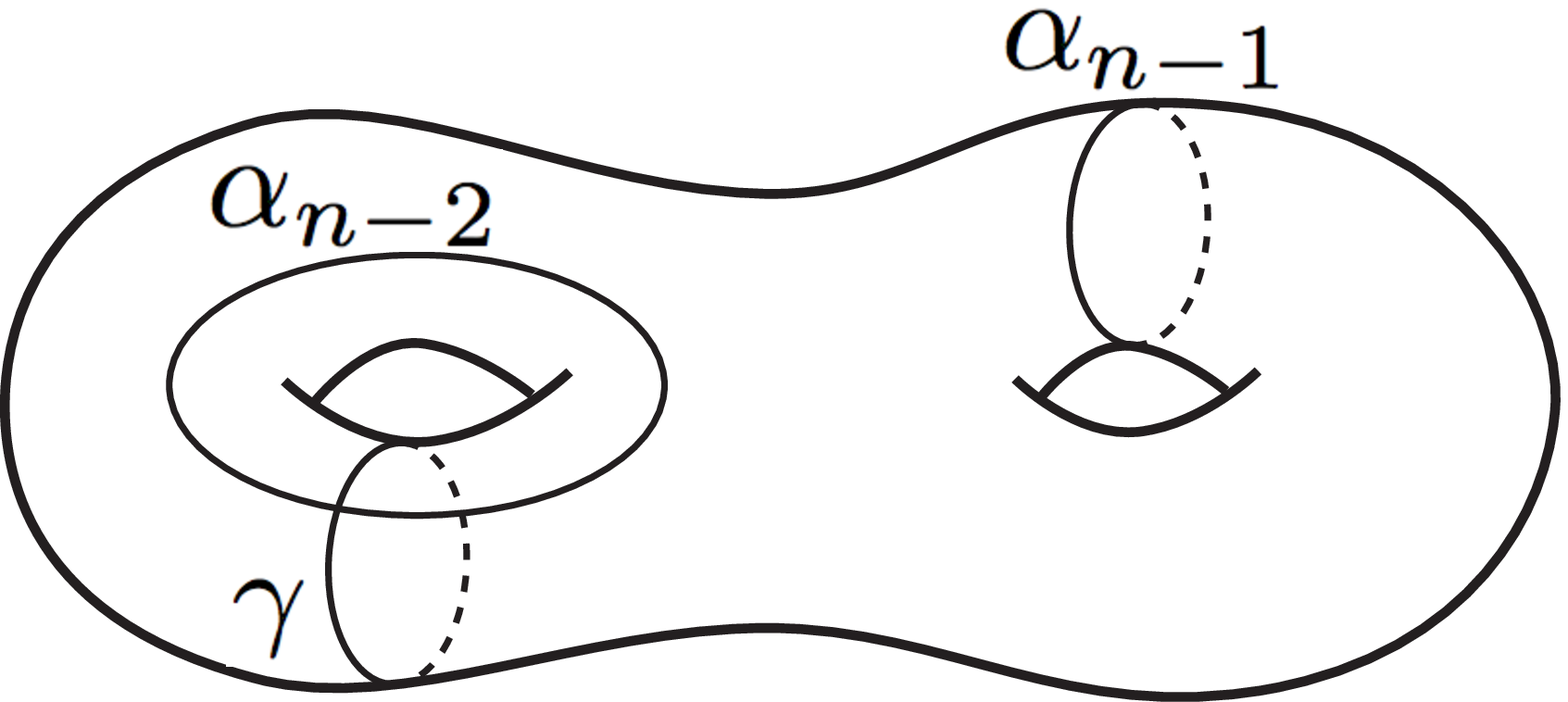}
 \end{center}
 \caption{}
\label{fig:2}
\end{figure}

\section{Proofs of Theorem \ref{thm-1} and Corollary \ref{thm-2}}

\begin{proof}[Proof of Theorem \ref{thm-1}]
Let $C_1$ and $C_2$ be copies of the compression body obtained by adding a 1-handle to $F\times [0,1]$, where $F$ is a genus-$(g-1)$ closed orientable surface ($g\geq 2$).
Let $\alpha_0$ be the boundary of the non-separating essential disk $D_1$ properly embedded in $C_1$ and $\alpha_2$ a simple closed curve on $\partial_{+} C_1$ which intersects $\alpha_0$ transversely in one point.
Then we construct a geodesic $[\alpha_0,\alpha_1,\dots,\alpha_{n+2}]$ on $\partial_{+} C_1$ as in Section \ref{sec-curves}. 
Note that $\alpha_{n+2}$ intersects $\alpha_{n}$ transversely in one point by the construction. 
Take any homeomorphism $f:\partial_+C_1\rightarrow \partial_+C_2$ such that $f(\alpha_{n+2})=\partial D_2$,
where $D_2$ is the non-separating essential disk properly embedded in $C_2$.
We identify the boundary components $\partial_+C_1$ and $\partial_+C_2$ by $f$, and let $P=\partial_+C_1=f^{-1}(\partial_+C_2)$.
Then $C_1\cup_P C_2$ is a genus-$g$ Heegaard splitting of a compact orientable 3-manifold.

Let $D_1'$ be a separating essential disk in $C_{1}$ disjoint from $\alpha_{2}$ obtained as follows. 
Let $D_1^+$ and $D_1^-$ be the components of ${\rm Cl}(\partial N(D_1)\setminus \partial_+C_1)$, where $N(D_1)$ is a regular neighborhood of $D_1$ in $C_1$. 
Take the subarc of $\alpha_2$ lying outside of the product region $N(D_1)$ betwteen $D_1^+$ and $D_1^-$.
Then $D_1'$ is obtained from $D_1^+\cup D_1^-$ by adding a band along the subarc of $\alpha_2$.
Similarly, we can obtain a separating essential disk $D_{2}'$ in $C_{2}$ disjoint from $\alpha_{n}$, by using $D_2$ and $\alpha_n$. 
On the other hand, we have $d_{P}(\alpha_{2}, \alpha_{n})=n-2$ since $[\alpha_0,\alpha_1,\dots,\alpha_{n+2}]$ is a geodesic in $\mathcal{C}(P)$.
Hence,
\begin{align*}
   d_{P}(\partial D_{1}', \partial D_{2}') &\leq d_{P}(\partial D_{1}', \alpha_{2})+d_{P}(\alpha_{2}, \alpha_{n})+d_{P}(\alpha_{n}, \partial D_{2}')\\
        &= 1+(n-2)+1\\
        &=n.
\end{align*}

Let $D_1''\subset C_1$ and $D_2''\subset C_2$ be  any essential disks.
By Proposition \ref{prop-disk-complex}, we have $d_P(\partial D_i'',\partial D_i)\leq 1$ for $i=1,2$.
This implies 
\begin{align*}
d_P(\partial D_1,\partial D_2)&\leq d_P(\partial D_1,\partial D_1'')+d_P(\partial D_1'',\partial D_2'')+d_P(\partial D_2'',\partial D_2)\\
&\leq 1+d_P(\partial D_1'',\partial D_2'')+1,
\end{align*}
and hence 
\begin{align*}
d_P(\partial D_1'',\partial D_2'')&\geq d_{P}(\partial D_{1}, \partial D_{2})-2\\
&=d_{P}(\alpha_{0}, \alpha_{n+2})-2\\
&=(n+2)-2\\&=n.
\end{align*}
Hence $d_P(\partial D_1'',\partial D_2'')\geq n$ for any pair of essential disks $D_1''\subset C_1$ and $D_2''\subset C_2$, which implies $d_{P}(\mathcal{D}(C_{1}), \mathcal{D}(C_{2}))\geq n$.
Since $d_P(\partial D_1',\partial D_2')\leq n$, we have $d_{P}(\mathcal{D}(C_{1}), \mathcal{D}(C_{2}))=n$. 
\end{proof}

In the remaining of the paper, we prove Corollary \ref{thm-2} by using the following proposition.
(Throughout this paper, given an embedding $\varphi:X\rightarrow Y$ between compact surfaces $X$ and $Y$, we abuse notations and use $\varphi$ to denote the map $\mathcal{C}^0(X)\rightarrow \mathcal{C}^0(Y)$ or $ \mathcal{P}(\mathcal{C}^0(X))\rightarrow  \mathcal{P}(\mathcal{C}^0(Y))$ induced by $\varphi:X\rightarrow Y$.)

\begin{proposition}\label{prop-appendix}
Let $V_1\cup_f V_2$ be a genus-$g(\geq 2)$ Heegaard splitting, where $V_1$ and $V_2$ are handlebodies.
Let $D_0$ be a separating essential disk in $V_1$, and let $\mathcal{D}_2$ be either $\mathcal{D}(V_2)$ or a finite subset of $\mathcal{D}(V_2)$. 
Assume that $d_{\partial V_2}(f(\partial D_0),\mathcal{D}_2)=n\geq 3$.
Then there exists a homeomorphism $g:\partial V_1\rightarrow \partial V_1$ 
such that $d_{\partial V_2}(fg(\mathcal{D}(V_1)),\mathcal{D}_2)=n$.
\end{proposition}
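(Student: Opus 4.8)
The plan is to transport the whole picture back to $\partial V_1$ through the isometry $f$ and to choose $g$ so that it fixes the curve $\partial D_0$ while being strongly mixing on the two sides of it. Write $c=\partial D_0$ and $\mathcal{D}_2':=f^{-1}(\mathcal{D}_2)\subset\mathcal{C}^0(\partial V_1)$; since $f$ induces an isometry of curve complexes, it is equivalent to produce a homeomorphism $g$ of $\partial V_1$ with $d_{\partial V_1}(g(\mathcal{D}(V_1)),\mathcal{D}_2')=n$, given $d_{\partial V_1}(c,\mathcal{D}_2')=n$. Because $D_0$ is separating and essential, $c$ cuts $\partial V_1$ into once-holed subsurfaces $\Sigma_1,\Sigma_2$ of genera $g_1,g_2\ge 1$, and cutting $V_1$ along $D_0$ yields handlebodies $W_1,W_2$ with $\Sigma_i\subset\partial W_i$. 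I will take $g$ to fix $c$, to preserve each $\Sigma_i$, and to restrict to a homeomorphism $g_i:=g|_{\Sigma_i}$ that is a high power $\psi_i^{N}$ of a pseudo-Anosov $\psi_i$ on $\Sigma_i$, the exponent $N$ being chosen last, large in terms of $n$, $V_2$ and $\mathcal{D}_2$.

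The upper bound is immediate: since $g(c)=c$ we have $c\in g(\mathcal{D}(V_1))$, so $d_{\partial V_1}(g(\mathcal{D}(V_1)),\mathcal{D}_2')\le d_{\partial V_1}(c,\mathcal{D}_2')=n$. For the lower bound I must show $d_{\partial V_1}(g(\partial E),\mathcal{D}_2')\ge n$ for every essential disk $E$ of $V_1$. If $E=D_0$ this is the hypothesis; otherwise $\partial E\not\simeq c$, so $\partial E$ cannot be isotoped off both $\Sigma_1$ and $\Sigma_2$, say it cuts $\Sigma_1$ (then so does $g(\partial E)$, since $g$ preserves $\Sigma_1$). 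The key point is that a short geodesic cannot leave $\Sigma_1$: if $[\gamma_0,\dots,\gamma_\ell]$ is a geodesic from $g(\partial E)$ to some $d\in\mathcal{D}_2'$ with $\ell<n$, then no $\gamma_i$ equals $c$ (else $d_{\partial V_1}(c,d)\le\ell<n$), and no $\gamma_i$ with $i\ge 1$ is isotopic into $\Sigma_2$, for such a curve is disjoint from $c$ and would give $d_{\partial V_1}(c,d)\le 1+(\ell-i)<n$; the case $i=0$ is excluded because $\gamma_0=g(\partial E)$ cuts $\Sigma_1$. Hence every $\gamma_i$ cuts $\Sigma_1$, and Lemma \ref{subsurface distance} yields ${\rm diam}_{\Sigma_1}(\pi_{\Sigma_1}(g(\partial E)),\pi_{\Sigma_1}(d))\le 2\ell\le 2(n-1)$.

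It therefore suffices to arrange, for $i=1,2$, the separation estimate
\[
{\rm diam}_{\Sigma_i}\bigl(g_i(\pi_{\Sigma_i}(\partial E)),\ \pi_{\Sigma_i}(d)\bigr)\ >\ 2(n-1)
\qquad\text{for all disks }E\text{ cutting }\Sigma_i\text{ and all }d\in\mathcal{D}_2',
\]
since this contradicts the bound above; using $\pi_{\Sigma_i}(g(\partial E))=g_i(\pi_{\Sigma_i}(\partial E))$ this is a statement purely in $\mathcal{C}(\Sigma_i)$. By surgering $\partial E$ along $D_0$, the projections $\pi_{\Sigma_i}(\partial E)$ of all disks cutting $\Sigma_i$ lie within bounded distance of the quasiconvex disk set $\mathcal{D}(W_i)\subset\mathcal{C}(\Sigma_i)$, so the estimate reduces to $d_{\Sigma_i}\bigl(\mathcal{D}(W_i),\ \psi_i^{-N}\pi_{\Sigma_i}(\mathcal{D}_2')\bigr)>2(n-1)$. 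I would choose $\psi_i$ so that its invariant laminations avoid the limit set of $\mathcal{D}(W_i)$; then, exactly in the spirit of the use of \cite[Proposition 4.6]{MM1} in Section \ref{sec-curves}, iterating $\psi_i^{-N}$ pushes any \emph{bounded} set off along the axis of $\psi_i$ toward a boundary point outside the limit set of $\mathcal{D}(W_i)$, so by quasiconvexity its distance to $\mathcal{D}(W_i)$ tends to infinity and the estimate holds for all large $N$. Combining $i=1,2$ and the case $E=D_0$ gives $d_{\partial V_1}(g(\mathcal{D}(V_1)),\mathcal{D}_2')\ge n$, hence equality.

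The main obstacle is the final reduction when $\mathcal{D}_2=\mathcal{D}(V_2)$: there $\pi_{\Sigma_i}(\mathcal{D}_2')=\pi_{\Sigma_i}(f^{-1}(\mathcal{D}(V_2)))$ need \emph{not} be a bounded subset of $\mathcal{C}(\Sigma_i)$ (indeed, $d_{\partial V_1}(c,\mathcal{D}_2')\ge 3$ forces every disk of the opposite handlebody to cross $c$, so $\Sigma_i$ behaves like a hole for it), and the receding-orbit argument, which needs a \emph{fixed ball} to push away from $\mathcal{D}(W_i)$, breaks down. This is precisely the step requiring control on the size of the image of the opposite disk complex under subsurface projection, supplied by Li's sharp radius estimate \cite{Li}; it is trivial when $\mathcal{D}_2$ is finite. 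I expect verifying this boundedness, together with checking that the constants assemble to the exact value $n$ rather than $n-1$, to be the delicate part of the argument.
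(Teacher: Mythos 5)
Your strategy coincides with the paper's own proof of Proposition \ref{prop-appendix}: cut $V_1$ along $D_0$; choose $g$ fixing $\partial D_0$ and acting on the two complementary pieces so as to push the disk sets of the cut-off handlebodies far from the projection of $\mathcal{D}_2$; get the upper bound from $g(\partial D_0)=\partial D_0$; and get the lower bound by combining the ``short geodesics keep cutting the subsurface'' claim with Lemma \ref{subsurface distance} and a surgery argument, with \cite[Theorem 1]{Li} supplying the boundedness of $\pi_{\Sigma_i}(f^{-1}(\mathcal{D}_2))$ in the infinite case (your observation that $n\geq 3$ forces every disk of $V_2$ to cut $f(\Sigma_i)$ is exactly why Li's theorem applies, and is how the paper gets the bound $12$). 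The paper realizes the pushing map by citing \cite{He,AS} where you sketch the quasiconvexity argument of \cite{MM3}, and it works in the capped surfaces $F_i\cup D_0$ where you work in the holed surfaces; these are inessential differences.

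There is, however, one step that is false as stated, and it carries the whole lower bound: the claim that for \emph{every} essential disk $E$ of $V_1$ cutting $\Sigma_i$, the projection $\pi_{\Sigma_i}(\partial E)$ lies within a uniformly bounded distance of $\mathcal{D}(W_i)$. Take $g=2$, so $W_1,W_2$ are solid tori and $\Sigma_1,\Sigma_2$ are once-holed tori. For an essential arc $\tau$ in $\Sigma_1$, let $E_\tau$ be the disk obtained by banding together two parallel copies of the meridian disk of $W_2$ along a band running through $W_1$ following $\tau$. Then $E_\tau$ is an essential disk of $V_1$, its boundary meets $\Sigma_1$ in two parallel copies of $\tau$ (so it cuts $\Sigma_1$), and $\pi_{\Sigma_1}(\partial E_\tau)$ is the single vertex of the Farey graph $\mathcal{C}(\Sigma_1)$ having the slope of $\tau$, while $\mathcal{D}(W_1)$ is the single vertex given by the meridian of $W_1$; choosing the slope of $\tau$ far from the meridian slope makes $d_{\Sigma_1}(\pi_{\Sigma_1}(\partial E_\tau),\mathcal{D}(W_1))$ arbitrarily large. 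The reason your surgery heuristic fails here is that all outermost components of $E_\tau\setminus D_0$ lie in $W_2$, so surgery along $D_0$ produces disks of $W_2$ only, and since your exponent $N$ must be chosen before $E$ is known, no uniform constant is available. The repair is precisely the paper's argument: let the disk, not its boundary, choose the side. For the bad disk $D_a$ realized with $|D_a\cap D_0|$ minimal, an outermost component $\Delta$ of $D_a\setminus D_0$ lies in some $W_i$; on \emph{that} side the surgered disk $\Delta'$ gives a vertex lying in both $\pi_{\Sigma_i}(a)$ and $\mathcal{D}(W_i)$ (an exact coincidence, no constant needed), and $a$ automatically cuts that $\Sigma_i$, so your geodesic claim and Lemma \ref{subsurface distance} give the diameter bound on that same side. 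Since you, like the paper, impose the separation estimate on both sides $i=1,2$ in advance, this re-pairing costs nothing; with it, and with your two deferred items filled in as above, your argument becomes the paper's proof.
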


\begin{proof}
Let $V_1^1$ and $V_1^2$ be the closures of the two components of $V_1\setminus D_0$.
For $i=1,2$, let $F_i$ be the subsurface $\partial V_1^i\cap \partial V_1$ of $\partial V_1$,
and let $\pi_{F_i}=\pi_0\circ\pi_A^i:\mathcal{C}^{0}(\partial V_1)\rightarrow \mathcal{P}(\mathcal{AC}^{0}(F_i))\rightarrow \mathcal{P}(\mathcal{C}^{0}(F_i))$ be the subsurface projection introduced in Section 2.
Let $P_i:F_i\rightarrow F_i\cup D_0$ be the inclusion map.
Since $D_0$ is separating, the image of any essential simple closed curve in $F_i$ by $P_i$ is essential in $F_i\cup D_0$. 
This immediately implies:
\begin{claim}\label{claim-Pi}
For any non-empty subset $E$ of $\mathcal{C}^0(F_i)$, we have
\begin{itemize}
\item $P_i(E)$ is non-empty, and
\item ${\rm diam}_{F_i\cup D_0}(P_i(E))\leq{\rm diam}_{F_i}(E)$.
\end{itemize}
\end{claim}

We note that there exists a constant $N$ such that 
\begin{equation}\label{eqn-0}
{\rm diam}_{F_i}(\pi_{F_i}f^{-1}(\mathcal{D}_2))\leq N\ (i=1,2).
\end{equation}
In fact, if $\mathcal{D}_2$ is a finite subset of $\mathcal{D}(V_2)$, this is clear. 
If $\mathcal{D}_2=\mathcal{D}(V_2)$, then, by \cite[Theorem 1]{Li} together with the assumption $d_{\partial V_2}(f(\partial D_0), \mathcal{D}_2)\geq 3$, we see that ${\rm diam}_{f(F_i)}(\pi_{f(F_i)}(\mathcal{D}_2))\leq 12$, which means ${\rm diam}_{F_i}(\pi_{F_i}f^{-1}(\mathcal{D}_2))\leq 12$.
By Claim \ref{claim-Pi}, the inequality (\ref{eqn-0}) implies  
\begin{equation}\label{eqn-0-2}
{\rm diam}_{F_i\cup D_0}(P_i\pi_{F_i}f^{-1}(\mathcal{D}_2))\leq  N\ (i=1,2).
\end{equation}

Let $\mathcal{D}'(V_1^i)$ be the subset of $\mathcal{C}^0(F_i)$ consisting of simple closed curves that bound disks in $V_1^i$ ($i=1,2$).
By the inequality (\ref{eqn-0-2}) and \cite{He} (see also \cite{AS}), we see that there exists a homeomorphism $g:\partial V_1\rightarrow \partial V_1$ such that $g(\partial D_0)=\partial D_0$ and
\begin{equation}\label{eqn-1-1}
d_{F_i\cup D_0}(P_i(\mathcal{D}'(V_1^i)), \hat{g_i}^{-1}(P_i\pi_{F_i}f^{-1}(\mathcal{D}_2)))\geq 2n
\end{equation}
for each $i=1,2$, where $\hat{g_i}:F_i\cup D_0\rightarrow F_i\cup D_0$ is a homeomorphism obtained by extending $g|_{F_i}:F_i\rightarrow F_i$. 
(We note that $g|_{F_i}:F_i\rightarrow F_i$ extends to a homeomorphism $\hat{g_i}:F_i\cup D_0\rightarrow F_i\cup D_0$ in a unique way up to isotopy in $D_0$ by Alexander's trick.)
Since $g(\partial D_0)=\partial D_0$, it is easy to see that $\hat{g_i}^{-1}(P_i\pi_{F_i}f^{-1}(\mathcal{D}_2))=P_i(g|_{F_i})^{-1}\pi_{F_i}f^{-1}(\mathcal{D}_2)=P_i\pi_{F_i}g^{-1}f^{-1}(\mathcal{D}_2)=P_i\pi_{F_i}(fg)^{-1}(\mathcal{D}_2)$.
We denote the map
$P_i\pi_{F_i}(fg)^{-1}(=\hat{g_i}^{-1}P_i\pi_{F_i}f^{-1}):\mathcal{C}^{0}(\partial V_2)\rightarrow \mathcal{P}(\mathcal{C}^0(F_i\cup D_0))$ by $\Phi_i$.
Then, by the inequality (\ref{eqn-1-1}), we have 
\begin{equation}\label{eqn-1-2}
d_{F_i\cup D_0}(P_i(\mathcal{D}'(V_1^i)), \Phi_i(\mathcal{D}_2))\geq 2n\ (i=1,2).
\end{equation}

Note that $d_{\partial V_2}(fg(\mathcal{D}(V_1)), \mathcal{D}_2)\leq n$ since $f(\partial D_0)=fg(\partial D_0)\in fg(\mathcal{D}(V_1))$ and $d_{\partial V_2}(f(\partial D_0),\mathcal{D}_2)=n$ by the assumption.
To prove $d_{\partial V_2}(fg(\mathcal{D}(V_1)), \mathcal{D}_2)=n$,
assume on the contrary that $d_{\partial V_2}(fg(\mathcal{D}(V_1)), \mathcal{D}_2)<n$, or equivalently, $d_{\partial V_1}(\mathcal{D}(V_1),$ $(fg)^{-1}(\mathcal{D}_2))<n$. 
Then there exist $a\in \mathcal{D}(V_1)$ and $b\in \mathcal{D}_2$ such that 
\begin{equation}\label{eqn-a-b}
d_{\partial V_1}(a,(fg)^{-1}(b))=m<n.
\end{equation}
Let $[\gamma_0,\gamma_1,\dots,\gamma_m]$ be a geodesic in $\mathcal{C}(\partial V_1)$ from $a$ to $(fg)^{-1}(b)$.
\begin{claim}\label{claim-gamma-cuts-fi}
Every $\gamma_j$ $(j=1,2,\dots,m)$ cuts both $F_1$ and $F_2$.
\end{claim}
\begin{proof}
Assume that $\gamma_j$ does not cut $F_i$ for some $j\in\{1,2,\dots,m\}$ and some $i\in\{1,2\}$.
Then $\gamma_j$ is disjoint from $\partial D_0(=\partial F_1=\partial F_2)$, and hence we have
\begin{eqnarray*}
n&=&d_{\partial V_1}(\partial D_0,(fg)^{-1}(\mathcal{D}_2))\\
&\leq& d_{\partial V_1}(\partial D_0,\gamma_j)+d_{\partial V_1}(\gamma_j,(fg)^{-1}(\mathcal{D}_2))\\
&\leq& d_{\partial V_1}(\partial D_0,\gamma_j)+d_{\partial V_1}(\gamma_j,\gamma_m)\\
&\leq& 1+(m-j)\\
&<& 1+n-j,
\end{eqnarray*}
a contradiction.
\end{proof}

Let $D_a$ be a disk in $V_1$ bounded by $a$.
We may assume that $|D_a\cap D_0|$ is minimal.
By using innermost disk arguments, we see that $D_a\cap D_0$ has no loop components.

\vspace{2mm}
{\it Case 1}. $|D_a\cap D_0|\neq 0$.
\vspace{2mm}

Let $\Delta$ be a component of $D_a\setminus D_0$ that is outermost in $D_a$.
Then $\Delta\subset V_1^i$ for some $i=1,2$.
Without loss of generality, we may assume that $\Delta\subset V_1^1$, which implies $a(=\gamma_0)$ cuts $F_1$.
This, together with Claim \ref{claim-gamma-cuts-fi}, shows that every $\gamma_j$ $(j=0,1,\dots,m)$ in the geodesic $[\gamma_0,\gamma_1,\dots,\gamma_m]$ from $a$ to $(fg)^{-1}(b)$ cuts $F_1$.
Hence, by Lemma \ref{subsurface distance}, we have 
\begin{equation}\label{eqn-2-1}
{\rm diam}_{F_1}(\pi_{F_1}(a), \pi_{F_1}(fg)^{-1}(b))\leq 2m<2n,
\end{equation}
which implies, by Claim \ref{claim-Pi},
\begin{equation}\label{eqn-2}
{\rm diam}_{F_1\cup D_0}(P_1\pi_{F_1}(a), \Phi_1(b))<2n.
\end{equation}
Note that $\partial\Delta\cap F_1$ is an element of the image of $a$ by $\pi_A^1:\mathcal{C}^0(\partial V_1)\rightarrow \mathcal{P}(\mathcal{AC}^0(F_1))$.
Further, by the minimality of $|D_a\cap D_0|$, the disk $\Delta$ is essential in $V_1^1$.
Let $D_0^1$ and $D_0^2$ be the two components of $D_0\setminus \Delta$,
and let $\Delta'$ be one of the disks properly embedded in $V_1^1$ which are parallel to $D_0^1\cup \Delta$ or $D_0^2\cup \Delta$.
Then we have $\partial\Delta'\in P_1(\mathcal{D}'(V_1^1))$, and also $\partial\Delta'\in P_1\pi_0(\partial\Delta\cap F_1)\subset P_1\pi_0\pi_A^1(a)=P_1\pi_{F_1}(a)$.
These, together with the inequality (\ref{eqn-2}), imply
\begin{eqnarray*}
d_{F_1\cup D_0}(P_1(\mathcal{D}'(V_1^1)), \Phi_1(\mathcal{D}_2))&\leq& d_{F_1\cup D_0}(\partial\Delta', \Phi_1(b))\\
&\leq&{\rm diam}_{F_1\cup D_0}(P_1\pi_{F_1}(a), \Phi_1(b))\\
&<&2n,
\end{eqnarray*}
which contradicts the inequality (\ref{eqn-1-2}).

\vspace{2mm}
{\it Case 2}. $|D_a\cap D_0|= 0$.
\vspace{2mm}

In this case, the arguments in Case 1 work with regarding $D_a=\Delta'$ to have a contradiction.

The above contradictions give $d_{\partial V_2}(fg(\mathcal{D}(V_1)),\mathcal{D}_2)=n$. 
\end{proof}

\begin{remark}
{\rm
If we pose the assumption that the distance $d(V_1\cup_f V_2)$ of the genus-$g$ Heegaard splitting $V_1\cup_f V_2$ is greater than or equal to $2$ in Proposition \ref{prop-appendix}, then the statement of the proposition can be strengthened as in the following form:
}

Let $D_0$ be a separating essential disk in $V_1$, and let $\mathcal{D}_2$ be any subset of $\mathcal{D}(V_2)$.
If $d_{\partial V_2}(f(\partial D_0),\mathcal{D}_2)=n$,
then there exists a homeomorphism $g:\partial V_1\rightarrow \partial V_1$ 
such that $d_{\partial V_2}(fg(\mathcal{D}(V_1)),\mathcal{D}_2)=n$.\\
{\rm
In fact, the statement can be proved basically by using the arguments of the proof of Proposition \ref{prop-appendix}.
The difference is the proof of inequality (\ref{eqn-0}). 
We should replace it with:

Note that $f(\partial D_0)(=f(\partial F_1)=f(\partial F_2))$ intersects with every essential loop in $\mathcal{D}(V_2)$, since $d_{\partial V_2}(f(\partial D_0),\mathcal{D}(V_2))\geq d(V_1\cup_f V_2)\geq 2$.
By \cite[Theorem 1]{Li}, either 
\begin{equation}\label{eqn-00}
{\rm diam}_{F_i}(\pi_{F_i}f^{-1}(\mathcal{D}_2))\leq {\rm diam}_{F_i}(\pi_{F_i}f^{-1}(\mathcal{D}(V_2)))\leq 12
\end{equation}
or $V_2$ is a $[0,1]$-bundle over $f(F_1)$.
In the latter case, it is easy to see that $g$ must be even and that the union of $V_2$ and $N(D_0)$ is homeomorphic to a $[0,1]$-bundle over a closed surface, say $S$, of genus $g/2$.
Note that the exterior of the union of $V_2$ and $N(D_0)$ is ${\rm Cl}(V_1\setminus N(D_0))$ and consists of two handlebodies of genus $g/2$. 
Thus, $S$ is a Heegaard surface of genus $g/2$, and $\partial V_2(=f(\partial V_1))$ is a stabilization of $S$.
This implies $d(V_1\cup_f V_2)=0$, a contradiction.
Hence, we have the inequality (\ref{eqn-00}).
}
\end{remark}

\begin{proof}[Proof of Corollary \ref{thm-2}]
We first note that the proof of the corollary for the case when $n=2$ is exceptional, 
and we give it in Appendix of this paper, and 
in this proof we show the corollary for the case $n \geq 3$. 
Let $C_1\cup_P C_2=C_1\cup_f C_2$ be a genus-$g$ Heegaard splitting with distance $n(\geq 3)$ obtained in Theorem \ref{thm-1}.
By the proof of Theorem \ref{thm-1}, there are separating essential disks $D_1$ and $D_2$ in $C_1$ and $C_2$, respectively, such that $d_{\partial_+C_2}(f(\partial D_1),\partial D_2)=n$.
Let $H_i$ $(i=1,2)$ be a handlebody of genus $(g-1)$.
Take and fix any homeomorphism $h_i:\partial H_i\rightarrow \partial_-C_i$, and put $V_i:=C_i\cup_{h_i}H_i$(, hence, $V_i$ is a handlebody of genus $g$).
Then $V_1\cup_f V_2$ is a genus-$g$ Heegaard splitting.

By Proposition \ref{prop-appendix}, there exists a homeomorphism $g_1:\partial V_1\rightarrow\partial V_1$ such that $d_{\partial V_2}(fg_1(\mathcal{D}(V_1)), \partial D_2)=n$.
By applying Proposition \ref{prop-appendix} again to $V_2\cup_{(fg_1)^{-1}}V_1$, we see that there exists a homeomorphism $g_2:\partial V_2\rightarrow\partial V_2$ such that $$d_{\partial V_1}((fg_1)^{-1}g_2(\mathcal{D}(V_2)), \mathcal{D}(V_1))=n.$$
That is, the distance of the Heegaard splitting $V_1\cup_{g_2^{-1}fg_1}V_2$ is exactly $n$.
\end{proof}

\appendix
\section*{Appendix (A construction of distance 2 examples)}

In this Appendix, we show for each $g \ge 2$, there is a genus-$g$
Heegaard splitting of a closed 3-manifold with distance 2. 
The examples are given by using the construction of strongly irreducible 
Heegaard splittings in \cite{K-R}. 
For the description of the construction we will use the notations 
$(H, A_1 \cup A_2)$, $N$, $R$ etc. in Section~2.1 of \cite{K-R}. 

For the case when $g=2$, let $F$ be an annulus, and 
let $R = F \times [0,1]$. 
For the case when $g\geq 3$, 
let $F$ be a genus-($g-2$) non-orientable surface (: connected sum of 
$g-2$ copies of projective planes)
with two holes, and  
let $R$ be the orientable twisted $[0,1]$-bundle over $F$. 
Note that $F$ is homotopy equivalent to a bouquet of  $g-1$ circles, hence 
$R$ is homeomorphic to the genus-($g-1$) handlebody. 
Let $R'$ be a copy of $R$. 
Then let $\mathcal{A}_1 \cup \mathcal{A}_2$ 
(resp. $\mathcal{A}_1' \cup \mathcal{A}_2'$) be the union of annuli in
$\partial R$ (resp. $\partial R'$) corresponding to the $[0,1]$-bundle over $\partial F$. 
Then let $N$ be the manifold obtained from 
$R$ and $R'$ by identifying the subsurfaces of the boundaries 
corresponding to the associated $\partial [0,1]$-bundle. 
It is easy to see that the manifolds $N$, $R$, $R'$ satisfy 
the conditions (1), (2), (3) in the page 639 of \cite{K-R}.

Recall from \cite{K-R} that 
$H$ is a genus-2 handlebody, and $\{ A_1,  A_2\}$ is a pair of primitive annuli in $\partial H$. 
Let 
$(H', A_1' \cup A_2')$ 
be a copy of 
$(H, A_1 \cup A_2)$. 
Then it is observed in \cite{K-R} that for any 2-bridge link $L$ in $S^3$ 
there is a homeomorphism 
$h: 
\text{Cl}(\partial H \setminus (A_1 \cup A_2)) 
\rightarrow 
\text{Cl}(\partial H' \setminus (A_1' \cup A_2'))$
such that 
the manifold obtained from $H$ and $H'$ by identifying 
$\text{Cl}(\partial H \setminus (A_1 \cup A_2)) $ and 
$\text{Cl}(\partial H' \setminus (A_1' \cup A_2'))$ by $h$ 
is homeomorphic to the exterior $E(L)$ of $L$. 
Then let $M$ be the 3-manifold obtained from $E(L)$ and $N$ 
by identifying their boundaries by an orientation-reversing homeomorphism 
such that $\mathcal{A}_i$ (resp. $\mathcal{A}_i'$) is identified with $A_i$ (resp. $A_i'$). 
Then it is shown in Section~2.1 of \cite{K-R} that $H \cup R$ and $H' \cup R'$ are genus-$g$ 
handlebodies, and these handlebodies give a Heegaard splitting of $M$. 

Then we have: 

\noindent 
{\bf Assertion.}
{\it  
Suppose that the 2-bridge link $L$ is not a trivial link or a Hopf link, then 
the distance of the Heegaard splitting $(H \cup R) \cup (H' \cup R')$ is exactly 2. 
}

\begin{proof}
Since $L$ is not a trivial link or a Hopf link, we see, by Proposition~2.1 of \cite{K-R}, 
that $(H \cup R) \cup (H' \cup R')$ is strongly irreducible, i.e. the distance of the Heegaard splitting 
is greater than or equal to 2.
On the other hand, since $\partial E(L)$ ($=\partial N$) $\subset M$ is an essential 
torus, we see, by \cite{Har}, that the distance of any Heegaard splitting of $M$ is at most $2$, 
and this together with the above shows that the distance of the Heegaard splitting 
$(H \cup R) \cup (H' \cup R')$ is exactly 2. 

\end{proof}

\end{document}